\documentclass[a4paper, oneside]{amsart}

\RequirePackage[l2tabu, orthodox]{nag}
\usepackage[all, warning]{onlyamsmath}
\usepackage{color}
\usepackage[new]{old-arrows}

\usepackage{amsmath}
\usepackage{amsthm}
\usepackage{amsfonts}
\usepackage{amssymb}
\usepackage{amscd}

\usepackage{mathrsfs}
\usepackage{bm}
\usepackage{bbm}
\usepackage{stmaryrd}
\usepackage{braket}

\usepackage[all]{xy}
\usepackage{pb-diagram}
\usepackage{graphicx}

\usepackage{url}
\usepackage[setpagesize=false, pagebackref]{hyperref}
\usepackage{mathtools}
\usepackage{hhline}

\hypersetup{
	colorlinks=true,       
	linkcolor=blue,          
	citecolor=blue,        
	filecolor=blue,      
	urlcolor=blue,           
}

\numberwithin{equation}{section}
\newtheorem{thm}{Theorem}
\numberwithin{thm}{section}
\newtheorem{lem}[thm]{Lemma}
\newtheorem{cor}[thm]{Corollary}
\newtheorem{prop}[thm]{Proposition}

\newtheorem{dfn}[thm]{Definition}

\theoremstyle{definition}
\newtheorem{rmk}[thm]{Remark}

\newcommand{\bZ}{\mathbb{Z}}
\newcommand{\Z}{\mathbb{Z}}
\newcommand{\Q}{\mathbb{Q}}

\newcommand{\bF}{\mathbb{F}}

\newcommand{\bQ}{\mathbb{Q}}

\newcommand{\bbH}{\mathbb{H}}

\newcommand{\mcC}{\mathcal{C}}

\newcommand{\mfZ}{\mathfrak{Z}}

\newcommand{\mfz}{\mathfrak{z}}

\newcommand{\fz}{\mathfrak{z}}
\newcommand{\fZ}{\mathfrak{Z}}

\DeclareMathOperator{\Hom}{Hom}

\DeclareMathOperator{\Res}{Res}
\DeclareMathOperator{\Cor}{Cor}

\DeclareMathOperator{\sgn}{sgn}

\DeclareMathOperator{\N}{N}

\newcommand{\pr}{\mathrm{pr}}

\newcommand{\SL}{\mathrm{SL}}
\newcommand{\PSL}{\mathrm{PSL}}

\newcommand{\Sym}{\mathrm{Sym}}

\newcommand{\bs}{\backslash}

\newcommand{\ra}{\rightarrow}

\newcommand{\setm}{\!-\!}

\def\={\;=\;}
\def\+{\,+\,}
\def\-{\,-\,}
\def\:{\;\colon\;} 

\def\ceq{\,\coloneqq\,}

\newcommand{\ord}{\mathrm{ord}}

\newcommand{\pord}{p\text{-}\mathrm{ord}}

\newcommand{\pmat}[1]{\begin{pmatrix}#1\end{pmatrix}}

\newcommand{\trace}{\mathrm{trace}}
\newcommand{\num}{\mathrm{numerator}}
\newcommand{\hyp}{\mathrm{hyp}}

\numberwithin{equation}{section}

\newcommand{\brk}[1]{\langle#1\rangle}

\newcommand{\Eis}{\mathrm{Eis}}

\newcommand{\LS}{\mathcal{V}}

\allowdisplaybreaks[3]

\begin{document}
\title{$p$-Ordinary Part of Hyperbolic Cycles on Modular Curves}

\subjclass[2020]{11F75, 11R23}
\keywords{}

\author{Hohto Bekki}
\address{Hohto Bekki \newline 
Department of Mathematics\\
Saga University\\
1 Honjo-machi\\
Saga-shi\\Saga\\840-8502\\
Japan}
\email{bekki@cc.saga-u.ac.jp}


\author{Ryotaro Sakamoto}
\address{Ryotaro Sakamoto \newline Department of Mathematics\\University of Tsukuba\\1-1-1 Tennodai\\Tsukuba\\Ibaraki 305-8571\\Japan}
\email{rsakamoto@math.tsukuba.ac.jp}

\begin{abstract}
In this paper, we study hyperbolic cycles in the first homology group with local coefficients of congruence subgroups of $\mathrm{SL}_2(\mathbb{Z})$. 
We prove that, for any prime number $p$, the $p$-ordinary part of the first homology group is generated by hyperbolic cycles. 
\end{abstract}


\maketitle

\section{Introduction}\label{sec:Introduction}
For any hyperbolic matrix $\gamma = \pmat{a&b\\c&d} \in \SL_2(\Z)$, i.e., $|\trace(\gamma)|>2$, we associate a binary quadratic form $Q_{\gamma}(X_1,X_2) \in \bZ[X_1, X_2]$ by 
\begin{align*}
Q_{\gamma}(X_1,X_2) \ceq -\frac{\sgn(a+d)}{\gcd(c,a-d,b)}(cX_1^2-(a-d)X_1X_2-bX_2^2). 
\end{align*} 
By construction, the coefficients of $Q_\gamma(X_1,X_2)$ are coprime.
Moreover, $Q_\gamma(X_1, X_2)$ is $\gamma$-invariant, in the sense that
\[
Q_\gamma((X_1,X_2)\cdot {}^{t}\gamma^{-1})
=
Q_\gamma(X_1,X_2).
\]
Since $\gamma$ is hyperbolic, the binary quadratic form $Q_\gamma(X_1, X_2) \in \bZ[X_1, X_2]$ is uniquely determined by these two properties, up to sign. 
Using $Q_\gamma(X_1, X_2)$, for any congruence subgroup 
$\Gamma \subset \SL_2(\Z)$ and any integer $k \geq 0$, 
we canonically attach to a hyperbolic matrix $\gamma \in \Gamma$ 
a homology class
\[
\mfz_{\Gamma,2k}(\gamma)
\coloneqq
(\gamma - 1) \otimes Q_\gamma(X_1,X_2)^k
\in
H_1(\overline{\Gamma}, \LS_{2k}),
\]
where $\overline{\Gamma}$ denotes the image of $\Gamma$ in 
$\PSL_2(\Z)$ and $\LS_{2k} \coloneqq \Sym^{2k}(\Hom(\Z^2,\Z))$. 
We regard $\LS_{2k}$ as the space of homogeneous polynomials
of degree $2k$ in two variables $X_1$ and $X_2$ over $\Z$. 
Geometrically, the homology class $\mfz_{\Gamma,2k}(\gamma)$
is represented by the closed geodesic on the modular curve
$\Gamma \bs \bbH$ associated with $\gamma$ and carrying the
coefficient $Q_\gamma(X_1,X_2)^k$. 
We call $\mfz_{\Gamma,2k}(\gamma)$ a hyperbolic cycle for $\Gamma$ of degree $2k$ associated with $\gamma$, and we let
\[
  \mfZ_{\Gamma,2k}
  \coloneqq 
  \bigl\langle
    \mfz_{\Gamma,2k}(\gamma)
    \mid 
    \gamma \in \Gamma, \, |\trace(\gamma)|>2
  \bigr\rangle_{\Z}
\]
denote the subgroup of $H_1(\overline{\Gamma},\LS_{2k})$ generated by the hyperbolic cycles for $\Gamma$ of degree $2k$.

In this paper, we study the $p$-ordinary part of the subgroup $ \mfZ_{\Gamma,2k} \subset H_1(\overline{\Gamma},\LS_{2k})$ for every prime number $p$. 
The motivation for the present study comes from our previous work~\cite{BS25}, 
where we investigated the denominators of special values of partial zeta 
functions of real quadratic fields using the Harder's theorem 
on the denominator of the Eisenstein class for $\SL_2(\Z)$ (see \cite[Theorem 2.13]{BS25} and \cite{Harder2025}). 
Since the pairing of hyperbolic cycles for $\SL_2(\Z)$ with the Eisenstein 
class gives rise to special values of partial zeta functions of real 
quadratic fields, we were naturally led to show that the subgroup 
$\mfZ_{\SL_2(\Z),2k}$ is ``sufficiently large''. 
More precisely, this amounts to proving that
\begin{equation}\label{eq:intro_pairing_cycles_and_eisenstein}
  \brk{\mfZ_{\SL_2(\Z),2k}, \Eis_{2k}}
  =
  \frac{\Z}{\num(\zeta(-1-2k))},
\end{equation}
where $\Eis_{2k} \in H^1\!\left(\PSL_2(\Z), \LS_{2k}^{\vee} \otimes \Q\right)$ 
denotes the Eisenstein class (see~\cite[Section~2.7]{BS25}), 
and $\brk{\,\cdot\,,\,\cdot\,}$ is the natural pairing between homology and cohomology.
We first examined whether $\mfZ_{\SL_2(\Z),2k}$ coincides with the whole group
$H_1(\PSL_2(\Z), \LS_{2k})$. 
However, calculations in some concrete cases suggested that this is not the case in general. 
We therefore turned to studying the size of $\mfZ_{\SL_2(\Z),2k}$ in the
$p$-ordinary part of $H_1(\PSL_2(\Z), \LS_{2k})$, since the Eisenstein class
$\Eis_{2k}$ is $p$-ordinary at every prime number $p$. 
In the end, we established the above equality by proving the following

\begin{thm}[{\cite[Theorem 9.22]{BS25}}]\label{intro:N geq 5 and p mid N case fZ = H_1-ordinary}
For any prime number $p$ and any integer $N$ such that $p \mid N$ and $N \geq 4$, we have
\begin{align}\label{intro:eqn:ordinary}
\mfZ_{\Gamma_{1}(N), 2k}^{\pord}  
= H_1^\ord(\overline{\Gamma_1(N)}, \LS_{2k, \bZ_{p}}).   
\end{align}
Here, $\LS_{2k,\Z_p} := \LS_{2k}\otimes \Z_p$, and $\mfZ_{\Gamma_{1}(N), 2k}^{\pord}$ (resp. $H_1^\ord(\overline{\Gamma_1(N)}, \LS_{2k, \bZ_{p}})$) denotes the $p$-ordinary part of $\mfZ_{\Gamma_{1}(N), 2k}$ (resp. $H_1(\overline{\Gamma_1(N)}, \LS_{2k, \bZ_{p}})$). 
\end{thm}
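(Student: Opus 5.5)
The plan is to work with $\Gamma=\Gamma_1(N)$, which is torsion-free for $N\ge4$, so $\overline{\Gamma}=\Gamma/\{\pm1\}\cap\Gamma$ is a free group. In that case $H_1(\overline{\Gamma},\LS_{2k,\Z_p})$ is computed by the short complex $C_1=\bigoplus_{i}\LS_{2k,\Z_p}\to C_0=\LS_{2k,\Z_p}$ attached to a free basis $g_1,\dots,g_r$. The $p$-ordinary part is cut out by Hida's idempotent $e=\lim_n U_p^{n!}$, where $U_p$ is the Hecke operator at $p\mid N$, built from the matrices $\pmat{1&j\\0&p}$ for $0\le j<p$. Since $e$ is a projector onto a direct summand, the statement is equivalent to the following: every class in the image of $e$ is congruent, modulo $p^m$ for each $m$, to an element of $e\,\mfZ$. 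Because $\mfZ^{\pord}$ is the closure of $e(\mfZ\otimes\Z_p)$ in a finitely generated $\Z_p$-module, it then suffices to prove $e\,H_1\subset \mfZ\otimes\Z_p+p\,H_1$ and apply Nakayama's lemma to the $\Z_p$-module $eH_1$.

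\textbf{Step 1.} Reduce to checking the statement mod $p$, that is, with coefficients $\LS_{2k}\otimes\bF_p$, by the Nakayama argument above. The short exact sequence $0\to\LS_{2k}\xrightarrow{p}\LS_{2k}\to\LS_{2k}/p\to0$ and the exactness of $e$ make this legitimate on ordinary parts.

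\textbf{Step 2.} Use the structure of $\LS_{2k}/p$ as a $\Gamma_1(N)$-module with $p\mid N$. Modulo $p$, the group $\Gamma_1(N)$ acts through upper-triangular unipotent matrices mod $p$, so the monomial $X_2^{2k}$ (or $X_1^{2k}$, depending on convention) spans a line stable under the action. By the usual Hida control argument, $U_p$ is nilpotent on the homology of the complement piece. Hence $eH_1(\overline{\Gamma},\LS_{2k}/p)$ is the image of $eH_1(\overline{\Gamma},\bF_p\cdot X^{2k})$, which is essentially weight-$0$ homology $H_1(\overline\Gamma,\bF_p)=\overline{\Gamma}^{ab}/p$.

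\textbf{Step 3.} Show that the ordinary part of $\overline{\Gamma}^{ab}\otimes\bF_p$ is spanned by the images of $\gamma-1$ for hyperbolic $\gamma$, and that these lift compatibly: mod $p$, $Q_\gamma^k$ should reduce to a unit multiple of $X^{2k}$ for suitable $\gamma$. For the first part, observe that every element of a free group of rank at least $2$ is a product of hyperbolic elements. Concretely, for a parabolic or identity basis element $g$ and a hyperbolic $h$, write $g=(gh^n)h^{-n}$ with $gh^n$ hyperbolic for large $n$, and use additivity of $\gamma\mapsto[\gamma-1]$ in $H_1$. For the second part, choose $\gamma\equiv\pmat{1&*\\0&1}\pmod{p^m}$, so that $c\equiv0$ and $a-d\equiv0$ and $Q_\gamma\equiv\pm b'X_2^2$ modulo $p$. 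This is where we need $\gcd(c,a-d,b)$ to be prime to $p$ with $b$ a unit, which we arrange by multiplying by powers of elements to adjust congruence classes.

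\textbf{Main obstacle.} The hard step is Step 3's compatibility. Classes $\mfz(\gamma)$ are not additive in $\gamma$, because the coefficient $Q_\gamma^k$ depends on $\gamma$. I would first try to handle this by working mod $p$ after applying $U_p^n$: $U_p$ sends hyperbolic cycles to sums of hyperbolic cycles, and after enough iterations the coefficient lands in the $X^{2k}$ line mod $p$. The needed identity is
\[
e\,\mfz(\gamma)\equiv e\,\bigl((\gamma-1)\otimes u_\gamma X_2^{2k}\bigr)\pmod p
\]
with $u_\gamma$ a unit. Combining this with Step 3's generation statement in weight $0$ would finish the proof.
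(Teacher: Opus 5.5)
Your skeleton is the paper's proof (Theorem~\ref{thm:main-wild} with $\Gamma=\Gamma_1(N)$). It has three ingredients: Nakayama via the injection $H_1(\overline{\Gamma},\LS_{2k,\bZ_p})\otimes\bF_p\hookrightarrow H_1(\overline{\Gamma},\LS_{2k,\bF_p})$; the map $j\colon\bF_p\to\LS_{2k,\bF_p}$, $b\mapsto bX_2^{2k}$, whose cokernel $\mcC$ is killed by every $\pmat{1&u\\0&p}$, so $U_p=0$ on $H_i(\overline{\Gamma_1(N)},\mcC)$ and $j_*$ is onto on ordinary parts; and a suitable generating set of $\overline{\Gamma_1(N)}^{\mathrm{ab}}$.

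The gap is in how you close Step~3. Your proposed fix for the ``main obstacle'' does not work. Iterating $U_p$ may move classes into the image of $j_*$, but it gives no reason why $e_p\,\mfz_{\Gamma_1(N),2k}(\gamma)$ should be a \emph{unit} multiple of $e_p\,j_*\bigl((\gamma-1)\otimes 1\bigr)$ modulo $p$. For $\gamma\in\Gamma(p)$ you have no control at all. For example, take $N=4$, $p=2$, $\gamma=\pmat{17&4\\4&1}\in\Gamma_1(4)$. Then $\gcd(c,a-d,b)=4$ and $Q_\gamma\equiv X_1^2+X_2^2\pmod 2$, which is not in the $X_2^2$-line. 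So no such identity is available before $e_p$, and nothing in your argument supplies it after $e_p$. Consequently, generation of $\overline{\Gamma}^{\mathrm{ab}}$ by arbitrary hyperbolic elements (the first half of your Step~3) is not enough.

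The obstacle disappears if you use your congruence remark systematically. Additivity is needed only in weight $0$, where $\gamma\mapsto(\gamma-1)\otimes1$ is a homomorphism onto $\overline{\Gamma}^{\mathrm{ab}}$. The Hecke-equivariant linear map $e_p\circ j_*=j_*\circ e_p$ then sends any spanning set of $H_1(\overline{\Gamma},\bF_p)$ to a spanning set of $H_1^{\ord}(\overline{\Gamma},\LS_{2k,\bF_p})$. Choose the spanning set among hyperbolic $\gamma\notin\Gamma(p)$. For these, $c\equiv a-d\equiv0$ and $b\not\equiv0\pmod p$, so $p\nmid\gcd(c,a-d,b)$ and $Q_\gamma^k\equiv u_\gamma X_2^{2k}$ with $u_\gamma\in\bF_p^\times$. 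Hence $j_*((\gamma-1)\otimes1)=u_\gamma^{-1}\bigl(\mfz_{\Gamma_1(N),2k}(\gamma)\bmod p\bigr)$ holds on the nose, before applying $e_p$.

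That such $\gamma$ generate is what the paper imports as \cite[Lemma~9.19]{BS25}, and your ``adjust congruence classes'' idea proves it. Let $h=\pmat{1+N&1\\N&1}\in\Gamma_1(N)$, which is hyperbolic with $b=1$. For $g\in\Gamma_1(N)$, write $[g]=[gh^{\epsilon n}]-\epsilon n[h]$ in $\overline{\Gamma}^{\mathrm{ab}}$ with $\epsilon=\pm1$.
\begin{itemize}
\item Since $\trace(gh^m)=A\lambda^m+B\lambda^{-m}$ with $A+B=\trace(g)\equiv 2\not\equiv 0\pmod N$, for a suitable $\epsilon$ we get $|\trace(gh^{\epsilon n})|\to\infty$.
\item Also $gh^{\epsilon n}\equiv\pmat{1&b_g+\epsilon n\\0&1}\pmod p$.
\end{itemize}
So any $n$ large with $b_g+\epsilon n\not\equiv0\pmod p$ does it. Congruences modulo $p^m$ and freeness of $\overline{\Gamma_1(N)}$ play no role.
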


Although Theorem~\ref{intro:N geq 5 and p mid N case fZ = H_1-ordinary} was sufficient for our purposes in \cite{BS25}, it naturally led us to ask whether this theorem extends to the case $p \nmid N$. 
The aim of this paper is to address this question. 
In Section~\ref{sec:ordinary part}, we prove the following theorem, which settles this problem completely.

\begin{thm}[Theorem \ref{thm:main}]\label{intro:thm_main}
Let $p$ be a prime number and let $N$ be a positive integer.
Let $\Gamma$ be a congruence subgroup such that $\Gamma_1(N) \subset \Gamma \subset \Gamma_0(N)$. 
Then  we have  
    \[
\mfZ_{\Gamma, 2k}^{\pord}  = H_1^\ord(\overline{\Gamma}, \LS_{2k} \otimes_{\bZ} \bZ_{p}). 
    \]
\end{thm}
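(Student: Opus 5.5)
We plan to reduce the statement to the already established case $p \mid N$ (the theorem of \cite{BS25} quoted above) through a level-raising and descent argument.

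\textbf{Step 1: raise the level.} Given $\Gamma_1(N)\subset\Gamma\subset\Gamma_0(N)$ and a prime $p$, choose an auxiliary level $M = Np^r$ with $r\ge 1$, large enough that $M\ge 4$. Put $\Gamma' := \Gamma\cap\Gamma_0(p^r)$. Then $\Gamma_1(M)\subset\Gamma'$. We first deduce the statement for $\Gamma_1(M)$ from the quoted theorem. Next we pass from $\Gamma_1(M)$ to intermediate groups $\Gamma_1(M)\subset\Delta\subset\Gamma_0(M)$. For this we use corestriction $\Cor\colon H_1(\overline{\Gamma_1(M)},\LS_{2k,\Z_p})\to H_1(\overline{\Delta},\LS_{2k,\Z_p})$. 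It commutes with the Hecke operator $U_p$, hence preserves ordinary parts. It also sends a hyperbolic cycle $\mfz_{\Gamma_1(M),2k}(\gamma)$ to $\mfz_{\Delta,2k}(\gamma)$, because $\gamma\in\Gamma_1(M)\subset\Delta$ and $Q_\gamma$ does not depend on the group.

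To get surjectivity on ordinary parts, we split according to whether $p$ divides $[\overline{\Delta}:\overline{\Gamma_1(M)}]$:
\begin{itemize}
\item If the index is prime to $p$, then $\Cor\circ\Res$ is multiplication by the index, which is a unit in $\Z_p$, so $\Cor$ is surjective.
\item Otherwise, we use that $\overline{\Gamma_1(M)}$ is normal in $\overline{\Delta}$ with abelian quotient. Homology with coinvariants then shows that the image of $\Cor$ is all of $H_1(\overline{\Delta},\ldots)$ modulo a part coming from $H_0$ of the cusps (boundary). We would argue that this boundary part has trivial ordinary projection, or is itself spanned by hyperbolic cycles.
\end{itemize}

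\textbf{Step 2: descend from level $Np^r$ to level $N$.} This is where $p\nmid N$ genuinely matters. For $\Gamma'=\Gamma\cap\Gamma_0(p)$ we use the classical comparison of ordinary parts: the map
\[
\Cor\colon H_1^{\ord}(\overline{\Gamma'},\LS_{2k,\Z_p})\to H_1^{\ord}(\overline{\Gamma},\LS_{2k,\Z_p})
\]
is surjective, and is even an isomorphism after projecting to the $p$-ordinary part with respect to $T_p$. This is the homological analogue of Hida's control/stabilization, where a $p$-ordinary form at level $N$ has a unique ordinary $p$-stabilization at level $Np$.

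To carry this out integrally, we write $\Res$ and $\Cor$ together with the map induced by $\pmat{p&0\\0&1}$, and express $T_p$ on level $N$ through $U_p$ on level $Np$. The relation $U_p^2 - T_pU_p + p^{k'}\langle p\rangle=0$ (with $k'=2k+1$) on the image of the two degeneracy maps shows that the unit root of the Hecke polynomial defines a splitting. Hence $H_1^{\ord}$ at level $N$ is the image of $H_1^{\ord}$ at level $Np$ under the first degeneracy map, i.e.\ under $\Cor$. As in Step 1, $\Cor$ sends hyperbolic cycles to hyperbolic cycles. Combining the two steps gives
\[
H_1^{\ord}(\overline{\Gamma},\LS_{2k,\Z_p}) = \Cor\bigl(\mfZ^{\pord}_{\Gamma',2k}\bigr)\subset \mfZ^{\pord}_{\Gamma,2k}.
\]
The reverse inclusion is trivial, so this proves the theorem.

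\textbf{Small $N$ and torsion.} Cases such as $N\le 3$ need $M\ge4$, which is harmless since $r$ may be enlarged freely. Torsion in $\overline{\Gamma}$ (elliptic elements) enters only through index arguments, and these are handled by the same corestriction identities.

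\textbf{Main obstacle.} The hardest step is making the stabilization argument in Step 2 integral. We must show that the ordinary projector $e=\lim T_p^{n!}$ at level $N$ maps $H_1$ into the image of the $p$-level-$Np$ ordinary part over $\Z_p$ and not only over $\Q_p$. This has to account for torsion and for the Eisenstein/boundary contributions, where $U_p$ eigenvalues $1$ and $p^{2k+1}$ interact. To handle this, we would write an explicit matrix identity between $e$ at level $N$ and $\Cor\circ e_{U_p}\circ\Res$ over $\Z_p[T_p]$. We would then check invertibility of the relevant operator, namely $U_p - p^{2k+1}U_p^{-1}\langle p\rangle$, on the ordinary part, where $U_p$ is a unit.
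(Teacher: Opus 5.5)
\textbf{There is a genuine gap: the central step of your argument is not proved.} Your architecture matches the paper's treatment of $p\nmid N$. You pass to $\Gamma'=\Gamma\cap\Gamma_0(p)$, use the case $p\mid N$ there, and push forward along $\pi=\Cor$, which carries hyperbolic cycles to hyperbolic cycles (Proposition~\ref{prop:Hecke stability}). But the step carrying the content, namely $H_1^{\ord}(\overline{\Gamma},\LS_{2k,\Z_p})\subset\pi\bigl(H_1^{\ord}(\overline{\Gamma'},\LS_{2k,\Z_p})\bigr)$, is never established. You appeal to Hida-type stabilization and then defer its integral version as the ``main obstacle''.

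Step~1 is also incomplete in two places. First, its second bullet is a promissory note, and that case really occurs: for $\Gamma=\Gamma_0(7)$ and $p=3$ one has $3\mid[\overline{\Gamma_0(21)}:\overline{\Gamma_1(21)}]=6$. Second, ``$r$ may be enlarged freely'' hides a comparison of $U_p$-ordinary parts between levels $Np^r$ and $Np$. Step~2 treats only $p\nmid N$, so it does not supply this. The paper does not rely on the quoted $\Gamma_1(N)$, $N\ge4$ result. It reproves the case $p\mid N$ directly. For $k>0$ the matrices $\pmat{1&u\\0&p}$ map $\LS_{2k,\bF_p}$ into $\bF_pX_2^{2k}$, which gives Hida's isomorphism $H_1^{\ord}(\overline{\Gamma_1(N)},\bF_p)\cong H_1^{\ord}(\overline{\Gamma_1(N)},\LS_{2k,\bF_p})$. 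This isomorphism sends degree-$0$ hyperbolic cycles of $\gamma\notin\Gamma(p)$ to unit multiples of degree-$2k$ ones.

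\textbf{The missing idea for Step~2 is to argue modulo $p$, where no stabilization is needed.} Put $\varphi=[\Gamma'\pmat{1&0\\0&p}\Gamma]$ and $\beta=\pmat{m&n\\N&p}\in\Gamma_0(N)$. Then $\pi\circ\varphi=T_p$ and $\varphi\circ\pi=U_p+V$, where $V=[\Gamma'\beta\pmat{p&0\\0&1}\Gamma']$. For $k>0$, the operators $V^2$ and $V\circ U_p$ are given by matrices in $pM_2(\Z)$, which act by $0$ on $\LS_{2k,\bF_p}$. Hence $T_p^n\circ\pi=\pi\circ(U_p+V)^n$ has image in $\pi\bigl(U_p^{n-1}H_1(\overline{\Gamma'},\LS_{2k,\bF_p})\bigr)$. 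Since $\pi$ is onto ($[\overline{\Gamma}:\overline{\Gamma'}]=p+1$), this gives the statement mod $p$. Nakayama's lemma then lifts it to $\Z_p$, using $H_1^{\ord}(\overline{\Gamma'},\LS_{2k,\Z_p})\otimes\bF_p\cong H_1^{\ord}(\overline{\Gamma'},\LS_{2k,\bF_p})$, a by-product of the case $p\mid N$.

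\textbf{Your integral route can also be completed for $k>0$, but your unit factor is wrong.} Set $\langle p\rangle=[\Gamma\beta\Gamma]$. One checks $U_p\circ\Res=p\,\varphi$ and $U_p\circ\varphi=\varphi\circ T_p-p^{2k}\Res\circ\langle p\rangle$. Take $z$ ordinary and let $u$ be the Hensel unit root of $X^2-T_pX+p^{2k+1}\langle p\rangle$. Then $y_z=\varphi(z)-\Res(p^{2k}\langle p\rangle u^{-1}z)$ satisfies $U_py_z=y_{uz}$, so $y_z$ is ordinary, and $\pi(y_z)=(u-p^{2k}\langle p\rangle u^{-1})z$. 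The exponent is $2k$, not $2k+1$ as in your ``relevant operator''. So the factor is a unit only because $k>0$; at $k=0$ it vanishes on Eisenstein classes with $u=\langle p\rangle=1$. This is why $k=0$ must be treated separately. There the statement is trivial, since $\overline{\Gamma}^{\mathrm{ab}}$ is generated by hyperbolic elements.
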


\begin{rmk}
The equality \eqref{eq:intro_pairing_cycles_and_eisenstein} is proved in
\cite[Theorem~9.15]{BS25}.
Since the proof there relies on
Theorem~\ref{intro:N geq 5 and p mid N case fZ = H_1-ordinary}
(\cite[Theorem~9.22]{BS25}),
it requires several technical computations.
However, by using Theorem~\ref{intro:thm_main} instead of
Theorem~\ref{intro:N geq 5 and p mid N case fZ = H_1-ordinary}, the proof becomes much simpler. 
\end{rmk}

As a consequence of Theorem~\ref{intro:thm_main} and the result of Goldman and Millson \cite{GM86}, we obtain the following 

\begin{thm}[{Theorem \ref{thm:quotient_finite_non-ordinary}}]\label{intro:thm_non-ord_finite}
Let $N$ be a positive integer, and let $\Gamma$ be a congruence subgroup
satisfying $\Gamma_1(N) \subset \Gamma \subset \Gamma_0(N)$.
Then the quotient
\[
H_1(\overline{\Gamma}, \LS_{2k}) \big/ \mfZ_{\Gamma,2k}
\]
is a Hecke module over $\Z$ of finite order, and is non-ordinary at every prime $p$. 
\end{thm}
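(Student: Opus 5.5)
We derive the statement from Theorem~\ref{intro:thm_main} together with a rational result of Goldman and Millson~\cite{GM86}. The quotient is only asserted to be a finitely generated Hecke module whose localization at each prime is non-ordinary, so the argument has three parts.

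\textbf{Step 1: finiteness.} Both $H_1(\overline{\Gamma},\LS_{2k})$ and $\mfZ_{\Gamma,2k}$ are finitely generated, since $\overline{\Gamma}$ is finitely presented and $\LS_{2k}$ is a finite free $\Z$-module. It therefore suffices to show that the quotient is torsion, i.e.\ that $\mfZ_{\Gamma,2k}\otimes\Q = H_1(\overline{\Gamma},\LS_{2k}\otimes\Q)$. We argue dually. A cohomology class in $H^1(\overline{\Gamma},\LS_{2k}^\vee\otimes\Q)$ that pairs to zero with every closed geodesic carrying the coefficient $Q_\gamma^k$ must vanish. This is the content of the Goldman--Millson result: the periods along closed geodesics (for $k=0$, the functions $\gamma\mapsto\langle \alpha, Q_\gamma^k\rangle$) detect cohomology. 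The main obstacle is matching their formulation, which is stated for flat bundles and geodesic currents, with our coefficient system $\LS_{2k}$ and the normalized forms $Q_\gamma$. The quadratic form is $\gamma$-invariant, and its $k$-th power spans the $\gamma$-fixed line of $\LS_{2k}\otimes\Q$ (up to the duality $\LS_{2k}\cong\LS_{2k}^\vee$ over $\Q$). So our cycles are exactly the "geodesic cycles with parallel coefficient" of~\cite{GM86}, and their density statement gives spanning over $\Q$.

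\textbf{Step 2: Hecke stability.} The group $\mfZ_{\Gamma,2k}$ is stable under Hecke operators, so the quotient is a Hecke module. A double coset $\Gamma\alpha\Gamma$ with $\alpha\in M_2(\Z)$, $\det\alpha>0$ acts on $(\gamma-1)\otimes Q_\gamma^k$ through the decomposition into single cosets. Each resulting term is a cycle attached to a power of a conjugate $\alpha_i\gamma^{m}\alpha_i^{-1}\cap\Gamma$, carrying the transformed form, which is a rational multiple of $Q_{\gamma'}^k$ for the relevant hyperbolic $\gamma'$. This is a standard check, presumably already done in the paper when defining $\mfZ^{\pord}$.

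\textbf{Step 3: non-ordinarity.} Fix $p$. Since the quotient $M$ is finite, $M\otimes\Z_p$ equals $H_1(\overline{\Gamma},\LS_{2k,\Z_p})/(\mfZ_{\Gamma,2k}\otimes\Z_p)$, by right exactness of $\otimes\Z_p$ and flatness of $\Z_p$. The ordinary projector $e=\lim U_p^{n!}$ is an idempotent in the Hecke algebra, so taking ordinary parts is exact. Hence
\[
e(M\otimes\Z_p)=H_1^\ord(\overline{\Gamma},\LS_{2k}\otimes\Z_p)\big/\mfZ^{\pord}_{\Gamma,2k}=0
\]
by Theorem~\ref{intro:thm_main}. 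Here we must check that $\mfZ^{\pord}_{\Gamma,2k}$ is defined as $e(\mfZ_{\Gamma,2k}\otimes\Z_p)$, which should hold. So $M$ is non-ordinary at every $p$.
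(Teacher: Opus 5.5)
Your Steps 2 and 3 follow the paper's argument. Hecke stability is Proposition~\ref{prop:Hecke stability}, and exactness of $e_p$ together with Theorem~\ref{thm:main} gives $e_p(M\otimes\bZ_p)=0$. One small correction to Step 2: you need $\alpha\cdot Q_\gamma$ to be an \emph{integral} multiple of $Q_{\alpha\gamma\alpha^{-1}}$, not just a rational one, because $\mfZ_{\Gamma,2k}$ is a $\bZ$-span. This holds because $\alpha\cdot Q_\gamma$ has integer coefficients and $Q_{\alpha\gamma\alpha^{-1}}$ is primitive.

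\textbf{The gap is in Step 1.} Goldman--Millson does not say that closed-geodesic periods detect all of $H^1(\overline{\Gamma},\LS_{2k}^{\vee}\otimes\bQ)$. Their result is that cusp forms are spanned by hyperbolic Poincar\'e series. In other words, closed-geodesic periods detect only the \emph{parabolic} (cuspidal) cohomology. The annihilator of the boundary subgroup is exactly the parabolic cohomology, so in homology \cite[Corollary~1 to Theorem~C]{GM86} gives only
\[
\bigl(H_1^{\partial}(\overline{\Gamma},\LS_{2k})+\mfZ_{\Gamma,2k}\bigr)\otimes_{\bZ}\bQ
= H_1(\overline{\Gamma},\LS_{2k})\otimes_{\bZ}\bQ .
\]
A congruence subgroup is never cocompact, and for $k>0$ the boundary (Eisenstein) part is nonzero. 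Already for $\SL_2(\bZ)$ the cusp cycle $(T-1)\otimes X_2^{2k}$ is non-torsion. Nothing you cite rules out an Eisenstein class that vanishes on every hyperbolic cycle. For $\SL_2(\bZ)$ these pairings are special values of partial zeta functions of real quadratic fields, so their non-degeneracy is an arithmetic input that the topology of geodesics does not supply. Your remark that $Q_\gamma^k$ spans the $\gamma$-fixed line is correct but does not address this.

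\textbf{How the paper closes it.} The missing step is Corollary~\ref{cor:boundary-rational}, namely $H_1^{\partial}(\overline{\Gamma},\LS_{2k})\otimes\bQ\subset\mfZ_{\Gamma,2k}\otimes\bQ$, and the paper derives it from Theorem~\ref{thm:main} itself.
\begin{enumerate}
\item For $\Gamma(N)\subset\Gamma\subset\Gamma_1(N)$, the boundary subgroup is generated as a Hecke module by the single cusp cycle $\fz_{\Gamma,2k}(T^w)$ (Lemma~\ref{lem:boundary-hecke-generater}).
\item After reducing to $\Gamma_1(N^2)$ (up to inverting $N$), one computes for $p\nmid N$ that $T_p\,\fz_{\Gamma_1(N^2),2k}(T)=(1+p^{2k+1}\langle p\rangle)\,\fz_{\Gamma_1(N^2),2k}(T)$. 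Hence $e_p$ fixes this cycle, so the boundary is $p$-ordinary.
\item Theorem~\ref{thm:main} then places the cycle in $\mfZ^{\pord}_{\Gamma_1(N^2),2k}\subset\mfZ_{\Gamma_1(N^2),2k}\otimes\bZ_p$.
\item Hecke stability gives $H_1^{\partial}\subset\mfZ\otimes\bZ[1/N]$ (Proposition~\ref{prop:boundary-Gamma_1}), and hence the rational inclusion for every congruence subgroup.
\end{enumerate}
Only combined with this does Goldman--Millson give Theorem~\ref{thm:quotient-torsion}, and hence finiteness. The ordinary theorem is therefore used twice: once for finiteness and once for non-ordinarity. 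Your Step 1 is incomplete without this boundary-ordinarity argument or some other way of handling the Eisenstein part.
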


Since the module $\mfZ_{\Gamma,2k}$ is of arithmetic interest, the authors expect that the structure (as a Hecke module) of
\[
H_1(\overline{\Gamma}, \LS_{2k}) \big/ \mfZ_{\Gamma,2k}
\]
have arithmetic meaning.
In a subsequent paper \cite{BekkiSakamoto_SL2computation}, the authors study the
case $\Gamma = \SL_2(\Z)$ and compute, by computer calculations, the structure of
the quotient $H_1(\overline{\Gamma}, \LS_{2k})/\mfZ_{\Gamma,2k}$ for various
integers $k$, establishing several properties of this quotient.

\subsection*{Acknowledgements}
Part of this research was carried out
during H.B.'s stay at the Max Planck Institute for Mathematics in Bonn. 
The authors are grateful to the institute for its hospitality and financial support. 
H.B. was supported by JSPS KAKENHI Grant Number JP25K23338 and Research Fellowship Promoting International Collaboration, The Mathematical Society of Japan. 
R.S. was supported by JSPS KAKENHI Grant Number JP24K16886.

\section{Preliminaries}

Let $\mathrm{PSL}_2(\bZ) := \mathrm{SL}_2(\bZ)/\{\pm 1\}$ be the projective special linear group over $\Z$. 
For any matrix $\gamma \in \mathrm{SL}_2(\bZ)$ (resp. subset $A \subset \mathrm{SL}_2(\bZ)$), we  denote by $\overline{\gamma} \in \mathrm{PSL}_2(\bZ)$ (resp. $\overline{A} \subset \mathrm{PSL}_2(\bZ)$) the image of $\gamma$ (resp. $A$) under the projection map $\mathrm{SL}_2(\bZ) \twoheadrightarrow  \mathrm{PSL}_2(\bZ)$.

\subsection{$M_2^+(\bZ)$-modules $\LS_k$}\label{subsection:Mn}

For any $2 \times 2$ matrix $\gamma=\begin{pmatrix}
    a&b\\
    c&d
\end{pmatrix}$, we denote the adjugate  of $\gamma$ by   
$\widetilde{\gamma}
:=
\begin{pmatrix}
    d&-b\\
    -c&a
\end{pmatrix}$. 
Put $M_2^+(\bZ) := \left\{\begin{pmatrix}
    a&b\\
    c&d
\end{pmatrix} \in M_2(\bZ) \, \middle| \, ad-bc > 0 \right\}$. 
In this paper, we define a left $M_2^+(\bZ)$-action on $\Hom(\bZ^2, \bZ)$ by 
\[
(\gamma  \phi)(x_1,x_2) := \phi((x_1, x_2) \cdot  {}^{t}\widetilde{\gamma}) = \phi(dx_1-bx_2, -cx_1+ax_2).  
\]
We also define
\[
\LS_k := \Sym^k(\Hom(\Z^2,\Z)), \qquad 
X_1 := \pr_1 \colon \bZ^2 \twoheadrightarrow \bZ,  \qquad X_2 := \pr_2 \colon \bZ^2 \twoheadrightarrow \bZ.
\]
Then the $M_2^+(\Z)$-module $\LS_k$ is canonically identified with the space of
homogeneous polynomials of degree $k$ in $\Z[X_1,X_2]$.
For notational simplicity, for any commutative ring $R$ we set
\[
\LS_{k,R} := \LS_k \otimes_{\Z} R.
\]

\subsection{Group homology}

Let $\Gamma$ be a (congruence) subgroup of $\mathrm{SL}_2(\bZ)$ and take  a left $\overline{\Gamma}$-module $\LS$. 
Then the $i$-th group homology $H_i(\overline{\Gamma},\LS)$ is defined to be the $i$-th Tor group $\mathrm{Tor}_i^{\bZ[\overline{\Gamma}]}(\bZ, \LS)$. 
Note that $H_0(\overline{\Gamma}, \LS) = \bZ \otimes_{\overline{\Gamma}} \LS$ is the module of
$\overline{\Gamma}$-coinvariants.

\subsubsection{$H_1(\overline{\Gamma},\LS)$}
Applying the tensor product functor $- \otimes_{\overline{\Gamma}} \LS$ to the short exact sequence of right $\bZ[\overline{\Gamma}]$-modules 
\[
0 \to I_{\overline{\Gamma}} \to \bZ[\overline{\Gamma}] \to \bZ \to 0, 
\]
where $I_{\overline{\Gamma}}$ denotes the augumentation ideal of $\bZ[\overline{\Gamma}]$, 
we obtain the exact sequence of $\bZ$-modules 
\[
0 \to H_1(\overline{\Gamma}, \LS) \to I_{\overline{\Gamma}}\otimes_{\overline{\Gamma}} \LS \to \LS \to H_0(\overline{\Gamma}, \LS) \to 0. 
\]
Consequently, the following natural identification is obtained:
\begin{align}\label{eq:identification_first_homology}
    H_1(\overline{\Gamma}, V) = \left\{\sum_{\gamma \in \overline{\Gamma}} (\gamma-1) \otimes v_\gamma  \in I_{\overline{\Gamma}} \otimes_{\overline{\Gamma}} V \,\, \middle| \,\,  \sum_{\gamma \in \overline{\Gamma}} (\gamma-1) v_\gamma  = 0  \right\}. 
\end{align}

\subsection{Definition of hyperbolic cycles}\label{sec:definition of hyperbolic cycles}

\begin{dfn}
    We say that a matrix $\gamma \in \mathrm{PSL}_{2}(\bZ)$ is hyperbolic if $|\mathrm{trace}(\gamma)| > 2$. 
For any subset $S \subset \mathrm{PSL}_{2}(\bZ)$, we denote by $S^{\rm hyp}$ the set of hyperbolic matrices in $S$. 

\end{dfn}

\begin{dfn}\label{def:rademacher integral}
For any hyperbolic matrix $\gamma :=
\begin{pmatrix}
    a&b\\
    c&d
\end{pmatrix}
\in \mathrm{PSL}_{2}(\bZ)^{\rm hyp}$, 
we define a binary quadratic form $Q_{\gamma}(X_1, X_2) \in \LS_2$ associated with $\gamma$ by
\[
Q_{\gamma}(X_1, X_2) := -\frac{\sgn(a+d)}{\mathrm{gcd}(c,a-d,b)}(cX_1^2-(a-d)X_1X_2-bX_2^2). 
\]
Note that $Q_\gamma(X_1,X_2)$ is $\gamma$-invariant. 
\end{dfn}

\begin{dfn}
Let $k \geq 0$ be an integer and let $\Gamma \subset  \mathrm{SL}_{2}(\bZ)$ be a congruence subgroup.  
For any matrix $\gamma \in \overline{\Gamma}^{\rm hyp}$, we define   
\[
\fz_{\Gamma, 2k}(\gamma) := (\gamma -1) \otimes Q_\gamma(X_1, X_2)^{k} \in H_1(\overline{\Gamma}, \LS_{2k}). 
\]
In this way, we obtain a map 
\begin{align*}
    \fz_{\Gamma, 2k} \colon \overline{\Gamma}^{\rm hyp} \to H_1(\overline{\Gamma}, \LS_{2k}).  
    \end{align*}
The homology class $\fz_{\Gamma,2k}(\gamma)$ is called the hyperbolic cycle of
degree $2k$ associated with $\gamma$.
We write $\mfZ_{\Gamma,2k}$ for the subgroup generated by the hyperbolic cycles
of degree $2k$, that is,
\[
\mfZ_{\Gamma,2k}
:=
\bigl\langle \,
  \fz_{\Gamma,2k}(\gamma)
  \bigm|
  \gamma \in \overline{\Gamma}^{\hyp} \,
\bigr\rangle_{\Z}
\subset
H_1(\overline{\Gamma}, \LS_{2k}).
\]

\end{dfn}

\begin{rmk}
In this paper, we changed the notation slightly from that in \cite{BS25}. The cycle $\mfz_{\Gamma,2k}(\gamma)$ (resp. the subgroup $\mfZ_{\Gamma,2k}$) in this paper has been denoted by $\mfz_{\Gamma,k+1}(\gamma)$ (resp. $\mfZ_{\Gamma,k+1}$) in \cite{BS25}. 
\end{rmk}

\section{Hecke operators}

\subsection{Definition of  double coset operators}

Let $\Gamma $ and $\Gamma'$ be congruence subgroups of $\SL_2(\Z)$. 
Let $\LS$ be a left $M_2^+(\Z)/\{\pm 1\}$-module. 

\begin{dfn}
for any matrix $\alpha \in M_2^+(\Z)$,  we define the double coset operator associated with $\alpha$
\[
 [\Gamma' \alpha \Gamma]  \colon H_i(\overline{\Gamma},\LS) \to H_i(\overline{\Gamma'},\LS)
\]
by the following way: 
Let $\{\beta_1,\dots,\beta_r\}$ be a set of representatives for the quotient set $\overline{\Gamma'} \backslash \overline{\Gamma'} \alpha \overline{\Gamma}$.
Then we obtain a well-defined homomorphism
\begin{equation}\label{eqn:double coset operator in degree 0}
H_0(\overline{\Gamma}, \LS) \to H_0(\overline{\Gamma'}, \LS); \,
m \mapsto \sum_{i=1}^r \beta_i m.
\end{equation}
This homomorphism does not depend on the choice of  the set $\{\beta_1,\dots,\beta_r\}$ of representatives. 
By the theory of effaceable $\delta$-functors, the homomorphism \eqref{eqn:double coset operator in degree 0} naturally extends to a
homomorphism
\begin{equation}\label{eqn:double coset operator in general}
[\Gamma' \alpha \Gamma] \colon
H_i(\overline{\Gamma}, \LS)
\to
H_i(\overline{\Gamma'}, \LS).
\end{equation}    
\end{dfn}

If we set 
\[
\Gamma_1\ceq \Gamma\cap \alpha^{-1}\Gamma'\alpha \quad  \textrm{ and } \quad \Gamma_2 \ceq \alpha\Gamma\alpha^{-1} \cap \Gamma', 
\]
then there is a bijection 
\begin{align*}
\overline{\Gamma}_1\bs \overline{\Gamma} \stackrel{\sim}{\to} \overline{\Gamma'}\bs\overline{\Gamma'}\alpha\overline{\Gamma};\, 
\gamma \mapsto \alpha \gamma. 
\end{align*}
Hence, if $\{s_1,\dots, s_r\}$ is a set of representatives of
$\overline{\Gamma_1} \backslash \overline{\Gamma}$, the homomorphism
\eqref{eqn:double coset operator in degree 0} decomposes as the composite
homomorphism
\begin{align}\label{eq:decomp_double_coset_operator_degree_0}
\begin{array}{ccccccccc}
\displaystyle H_0(\overline{\Gamma}, \LS) 
& \xrightarrow{\Res} &
\displaystyle H_0(\overline{\Gamma}_1, \LS) 
& \xrightarrow{\alpha_*} &
\displaystyle H_0(\overline{\Gamma}_2, \LS) 
& \xrightarrow{\Cor} &
\displaystyle H_0(\overline{\Gamma'}, \LS) \\
m
& \mapsto &
\sum_{i=1}^r s_i m
& \mapsto &
\sum_{i=1}^r \alpha s_i m
& \mapsto &
\sum_{i=1}^r \alpha s_i m.  
\end{array}
\end{align}
Hence the same decomposition holds for the double coset operator
\eqref{eqn:double coset operator in general} in degree $i$, yielding
\begin{align*}
[\Gamma'\alpha \Gamma] \colon H_i(\overline{\Gamma},\LS)
\overset{\Res}{\to}
H_i(\overline{\Gamma}_1,\LS)
\overset{\alpha_*}{\to}
H_i(\overline{\Gamma}_2,\LS)
\overset{\Cor}{\to}
H_i(\overline{\Gamma'},\LS).   
\end{align*}

\subsubsection{$p$-th Hecke operator}

\begin{dfn}
Let $N > 0$ be an integer and let $\Gamma$ be a congruence subgroup satisfying $\Gamma_1(N) \subset \Gamma \subset  \Gamma_0(N)$. 
We then define 
\begin{align*}
    T_p := [\,\Gamma \begin{pmatrix}
    1&0\\
    0&p
\end{pmatrix}\Gamma\,] 
\,\,\, 
\textrm{ if $p \nmid N$ } \quad \textrm{ and } \quad    
U_p := [\,\Gamma \begin{pmatrix}
    1&0\\
    0&p
\end{pmatrix}\Gamma\,]
\,\,\, \textrm{ if $p \mid N$}. 
\end{align*}
\end{dfn}

\begin{rmk}\label{rem:coset-decomposition}
Take a matrix $\beta := \begin{pmatrix}m&n\\N&p\end{pmatrix} \in \Gamma_0(N)$ if $p \nmid N $. It is well-known that we have the following decompositions (see, for example, \cite[Propositions 3.33 and 3.36]{Shimura-book-arith-autom-funct}): 
\begin{align*}
\Gamma \begin{pmatrix}
    1& 0\\
    0&p
\end{pmatrix} \Gamma  =  
\begin{cases}
\bigsqcup_{j=0}^{p-1}
\Gamma 
\begin{pmatrix}
    1&j\\
    0&p
\end{pmatrix}  \sqcup   \Gamma  \beta\begin{pmatrix}
    p&0\\
    0&1
\end{pmatrix}
 & \textrm{if} \quad p\nmid N, 
\\ 
\\
\bigsqcup_{j=0}^{p-1}
\Gamma
\begin{pmatrix}
    1&j\\
    0&p
\end{pmatrix}  & \textrm{if} \quad  p\mid N. 
\end{cases}
\end{align*}
\end{rmk}

\subsection{Hecke stability of $\mfZ_{\Gamma, 2k}$}

\begin{prop}\label{prop:Hecke stability}
Let $\Gamma $ and $\Gamma'$ be congruence subgroups of $\SL_2(\Z)$. 
For any matrix $\alpha \in M_2^+(\Z)$, the subgroup $\mfZ_{\Gamma, 2k} \subset H_1(\overline{\Gamma},\LS_{2k})$ maps to $\mfZ_{\Gamma', 2k} \subset H_1(\overline{\Gamma'},\LS_{2k})$ under the double coset operator $[\Gamma' \alpha \Gamma]$. 
\end{prop}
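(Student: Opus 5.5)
We use the factorization $[\Gamma'\alpha\Gamma] = \Cor\circ\alpha_*\circ\Res$ established above, with $\Gamma_1 = \Gamma\cap\alpha^{-1}\Gamma'\alpha$ and $\Gamma_2 = \alpha\Gamma\alpha^{-1}\cap\Gamma'$. The plan is to show that each of the three maps sends hyperbolic cycles to $\Z$-linear combinations of hyperbolic cycles. Since all three are homomorphisms, this suffices. Throughout we use the identification \eqref{eq:identification_first_homology} of $H_1$ with cycles $\sum(\gamma-1)\otimes v_\gamma$ in $I_{\overline{\Gamma}}\otimes_{\overline{\Gamma}}\LS$.

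\emph{Corestriction.} For $\Gamma_2\subset\Gamma'$, corestriction on $H_1$ is induced by the inclusion $I_{\overline{\Gamma_2}}\otimes_{\overline{\Gamma_2}}\LS\to I_{\overline{\Gamma'}}\otimes_{\overline{\Gamma'}}\LS$. It therefore sends $(\gamma-1)\otimes Q_\gamma^k$ to the element given by the same formula, which is $\fz_{\Gamma',2k}(\gamma)$. This step is immediate.

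\emph{Conjugation $\alpha_*$.} The map $\alpha_*$ sends $(\gamma-1)\otimes v$ to $(\alpha\gamma\alpha^{-1}-1)\otimes\alpha v$. Here $\gamma' := \alpha\gamma\alpha^{-1}\in\Gamma_2$ has the same trace as $\gamma$, so it is hyperbolic. Moreover $\alpha Q_\gamma$ is a $\gamma'$-invariant integral quadratic form: invariance follows from $\gamma'(\alpha Q_\gamma)=\alpha\gamma Q_\gamma$, using that $\LS$ is an $M_2^+(\Z)$-module and that $\alpha^{-1}$ acts on $\LS_{2k,\Q}$. By the uniqueness statement in the introduction, the $\gamma'$-invariant integral forms form a rank one group spanned by $Q_{\gamma'}$. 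Hence $\alpha Q_\gamma = c\,Q_{\gamma'}$ with $c\in\Z$, so $\alpha_*\fz_{\Gamma_1,2k}(\gamma) = c^k\fz_{\Gamma_2,2k}(\gamma')$. To justify the rank one claim we note that the $\gamma'$-fixed subspace of $\LS_{2,\Q}$ is one-dimensional for hyperbolic $\gamma'$, because the eigenvalues of $\gamma'$ on $\LS_2$ are $\lambda^2,1,\lambda^{-2}$ with $\lambda\neq\pm1$.

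\emph{Restriction (main step).} Let $\Gamma_1\subset\Gamma$ have finite index. Fix $\gamma\in\overline{\Gamma}^{\hyp}$ and let $\Delta$ be the cyclic subgroup generated by $\gamma$. The class $\fz_{\Gamma,2k}(\gamma)$ is the corestriction from $\Delta$ of the class $(\gamma-1)\otimes Q_\gamma^k\in H_1(\Delta,\LS)$. We then apply the Mackey double coset formula,
\[
\Res^{\overline{\Gamma}}_{\overline{\Gamma_1}}\Cor^{\overline{\Gamma}}_{\Delta} \;=\; \sum_{g\in\overline{\Gamma_1}\bs\overline{\Gamma}/\Delta}\Cor^{\overline{\Gamma_1}}_{\overline{\Gamma_1}\cap g\Delta g^{-1}}\circ c_g\circ\Res^{\Delta}_{g^{-1}\overline{\Gamma_1}g\cap\Delta}.
\]
Each group $g^{-1}\overline{\Gamma_1}g\cap\Delta$ is generated by some power $\gamma^{m}$ with $m\ge1$, by finite index. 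Restricting from $\langle\gamma\rangle$ to $\langle\gamma^m\rangle$ sends $(\gamma-1)\otimes Q$ to $(\gamma^m-1)\otimes Q$. To see this, write the cycle via the transfer: the coset representatives are $1,\gamma,\dots,\gamma^{m-1}$, and $Q$ is $\gamma$-invariant. Also $Q_{\gamma^m}=\pm Q_\gamma$ by uniqueness, so the sign is harmless after taking the $k$-th power up to sign. Conjugation by $g$ sends this class to $\pm\fz(g\gamma^m g^{-1})$, by the same argument as in the $\alpha_*$ step, and corestriction preserves hyperbolic cycles. Hence $\Res\,\fz_{\Gamma,2k}(\gamma)$ is a sum of terms $\pm\fz_{\Gamma_1,2k}(g\gamma^{m_g}g^{-1})$, all of which lie in $\mfZ_{\Gamma_1,2k}$.

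The restriction step is where I expect the real work to lie. First, the transfer must be checked concretely on the chain-level identification \eqref{eq:identification_first_homology}, rather than only invoked abstractly. Second, there is a subtlety if $\gamma\in\overline{\Gamma}$ has torsion interactions in $\PSL_2$. This cannot happen here, since hyperbolic elements have infinite order, so $\Delta\cong\Z$ and the Mackey formula applies cleanly.
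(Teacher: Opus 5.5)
Your proof is correct, and its skeleton is the paper's. You use the same factorization $[\Gamma'\alpha\Gamma]=\Cor\circ\alpha_*\circ\Res$ and the same one-line treatment of $\Cor$. The $\alpha_*$ argument is also the same: an $\alpha\gamma\alpha^{-1}$-invariant integral form is an integer multiple of the primitive form $Q_{\alpha\gamma\alpha^{-1}}$. Your eigenvalue remark supplies the rank-one fact that the paper takes for granted.

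The only difference is the restriction step.

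\textbf{The paper's route.} It works by hand, in four steps:
\begin{enumerate}
\item It uses the Shapiro isomorphism $\psi\colon H_1(\overline{\Gamma}_1,\LS)\cong H_1(\overline{\Gamma},\Z[\overline{\Gamma}]\otimes_{\overline{\Gamma}_1}\LS)$.
\item It writes $\psi\circ\Res$ explicitly as $(\gamma-1)\otimes Q_\gamma^k\mapsto\sum_i(\gamma-1)s_i^{-1}\otimes s_iQ_\gamma^k$.
\item It splits this sum into orbits of the permutation $\nu$ of $\overline{\Gamma}_1\bs\overline{\Gamma}$ induced by right multiplication by $\gamma^{-1}$.
\item It telescopes each orbit to $\pm\psi(\fz_{\Gamma_1,2k}(s_i\gamma^{|I_\lambda|}s_i^{-1}))$.
\end{enumerate}

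\textbf{Your route.} You write $\fz_{\Gamma,2k}(\gamma)$ as a corestriction from $\langle\gamma\rangle$ and quote the Mackey double coset formula.

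\textbf{Why they agree.} The two are the same computation. The $\nu$-orbits are exactly the double cosets $\overline{\Gamma}_1\bs\overline{\Gamma}/\langle\gamma\rangle$. The orbit length $|I_\lambda|$ is your $m$, and the paper's element $s_i\gamma^{|I_\lambda|}s_i^{-1}$ is your $g\gamma^mg^{-1}$. The paper's telescoping identity, run for $\langle\gamma^m\rangle\subset\langle\gamma\rangle$, is precisely the cyclic-restriction fact you sketch and flag as needing a check. Under $H_1(\langle\gamma\rangle,\LS)\cong\LS^{\gamma}$, $v\mapsto(\gamma-1)\otimes v$, the transfer to $\langle\gamma^m\rangle$ is just the inclusion $\LS^{\gamma}\subset\LS^{\gamma^m}$.

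\textbf{Trade-offs.} Your version is shorter and shows structurally why the answer is a sum over double cosets. It costs two imported facts: the Mackey formula in homology, and the identification of the paper's $\Res$ (defined by extending the degree-$0$ map) with the standard transfer. The latter holds because both extend the same degree-$0$ map, by universality of the homology $\delta$-functor. The paper's version is self-contained and stays on the explicit description of $H_1$ as cycles in $I_{\overline{\Gamma}}\otimes_{\overline{\Gamma}}\LS$.
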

\begin{proof}
Set $\Gamma_1\coloneqq \Gamma \cap \alpha^{-1}\Gamma' \alpha$ and $\Gamma_2\coloneqq   \alpha\Gamma \alpha^{-1} \cap \Gamma'$. 
Since  the double coset operator $[\Gamma' \alpha \Gamma] \colon H_1(\overline{\Gamma},\LS_{2k}) \to H_1(\overline{\Gamma'},\LS_{2k})$  decomposes as the composite homomorphism 
\begin{align*}
H_1(\overline{\Gamma},\LS_{2k})
\overset{\Res}{\ra}
H_1(\overline{\Gamma}_1,\LS_{2k})
\overset{\alpha_*}{\ra}
H_1(\overline{\Gamma}_2,\LS_{2k})
\overset{\Cor}{\ra}
H_1(\overline{\Gamma'},\LS_{2k}),  
\end{align*}
it suffices to prove that 
\begin{align*}
\Res(\mfZ_{\Gamma, 2k})\subset \mfZ_{\Gamma_{1}, 2k}, \qquad 
\alpha_*(\mfZ_{\Gamma_1, 2k})\subset \mfZ_{\Gamma_{2}, 2k}, \qquad
\Cor(\mfZ_{\Gamma_2, 2k})\subset \mfZ_{\Gamma', 2k}. 
\end{align*}
The inclusion $\Cor(\mfZ_{\Gamma_2, 2k})\subset \mfZ_{\Gamma', 2k}$ follows immediately from  the definition of $\Cor$.  
To show that $\alpha_*(\mfZ_{\Gamma_1,2k}) \subset \mfZ_{\Gamma_2,2k}$,
let $\gamma \in \Gamma_1$ be a hyperbolic matrix. 
One then has
\[
\alpha_* ( (\gamma - 1) \otimes Q_\gamma(X_1, X_2)^k ) 
= (\alpha \gamma \alpha^{-1} - 1) \otimes \alpha \cdot Q_\gamma(X_1, X_2)^k.
\]
Since  $\alpha \cdot Q_\gamma(X_1, X_2)$ is an 
$\alpha \gamma \alpha^{-1}$-invariant binary quadratic form with coefficients in $\Z$, it follows that 
\[
\alpha \cdot Q_\gamma(X_1, X_2) \in \Z Q_{\alpha \gamma \alpha^{-1}}(X_1, X_2).
\]
Hence, we obtain $\alpha_*(\mfZ_{\Gamma_1,2k}) \subset \mfZ_{\Gamma_2,2k}$ since $\alpha \Gamma_1 \alpha^{-1} = \Gamma_2$. 


Let us show that $\Res(\mfZ_{\Gamma, 2k})\subset \mfZ_{\Gamma_{1}, 2k}$. 
We first consider how to compute the homology class  $\Res((\gamma-1)\otimes Q_{\gamma}(X_1, X_2)^k)$. 
Let $\LS$ be a left $\Z[\overline{\Gamma}]$-module. 
Since $\Z[\overline\Gamma]$ is free as a right $\Z[\overline{\Gamma}_1]$-module and $\bZ \otimes_{\overline{\Gamma}} \Z[\overline\Gamma] \otimes_{\overline{\Gamma}_1} \LS = \bZ \otimes_{\overline{\Gamma}_1} \LS$, 
there is a natural isomorphism 
\begin{align*}\label{eqn:another presentation of homology}
H_i(\overline{\Gamma}_1,\LS)
\cong H_i(\overline{\Gamma}, \bZ[\overline{\Gamma}] \otimes_{\overline{\Gamma}_1}  \LS)    
\end{align*} 
for any integer $i \geq 0$. 
Moreover, under the identification \eqref{eq:identification_first_homology}, 
the isomorphism 
\[
\psi \colon H_1(\overline{\Gamma}_1,\LS)
\stackrel{\sim}{\to} H_1(\overline{\Gamma}, \bZ[\overline{\Gamma}] \otimes_{\overline{\Gamma}_1}  \LS)
\]
is the homomorphism induced by 
the inclusion map $I_{\overline{\Gamma}_1} \hookrightarrow I_{\overline{\Gamma}}$. 
Let $\{s_1,\dots, s_r\}$ be a set of representatives of
$\overline{\Gamma}_1 \backslash \overline{\Gamma}$.
We then have a $\overline{\Gamma}$-equivariant homomorphism
\[
R \colon \LS \to \Z[\overline{\Gamma}] \otimes_{\overline{\Gamma}_1} \LS; 
\, m \mapsto \sum_{i=1}^r s_i^{-1} \otimes s_i m,
\]
which is independent of the choice of representatives $s_1,\dots,s_r$.
Since the induced homomorphism 
\[
R \colon H_0(\overline{\Gamma}, \LS) \to H_0(\overline{\Gamma}, \Z[\overline{\Gamma}] \otimes_{\overline{\Gamma}_1} \LS) = H_0(\overline{\Gamma}_1,  \LS) 
\]
coincides with the restriction map $\mathrm{Res}$ in \eqref{eq:decomp_double_coset_operator_degree_0}, it follows that 
\[
R_* = \psi \circ \mathrm{Res}  \colon H_1(\overline{\Gamma}, \LS) \to H_1(\overline{\Gamma}, \Z[\overline{\Gamma}] \otimes_{\overline{\Gamma}_1} \LS).  
\]
Thus for any  hyperbolic matrix $\gamma \in \overline{\Gamma}^{\rm hyp}$, we obtain 
\[
(\psi \circ \Res)((\gamma-1)\otimes Q_{\gamma}(X_1, X_2)^k) = \sum_{i=1}^r (\gamma-1)s_i^{-1}\otimes s_i  Q_{\gamma}(X_1, X_2)^k, 
\]
where this equality holds in $I_{\overline{\Gamma}}\otimes_{\overline{\Gamma}_1}\LS_{2k}$. 
For each integer $1 \leq i \leq r$, define $g_i \in \overline{\Gamma}_1$ and $\nu(i) \in \{1,\dots, r\}$ by the equation 
\begin{align*}
{s_i \gamma^{-1}} = {g_i^{-1} s_{\nu(i)}}. 
\end{align*}
Viewing $\nu$ as a permutation of $\{1,\dots,r\}$, let
\[
\{1,\dots,r\}=\bigsqcup_{\lambda=1}^m I_\lambda
\]
be the decomposition into the $\nu$-orbits. 
We then have
\begin{align*}
\sum_{i=1}^r (\gamma-1)s_i^{-1}\otimes s_iQ_{\gamma}(X_1, X_2)^k
=
\sum_{\lambda =1}^m \sum_{s \in I_{\lambda}} (\gamma-1)s^{-1}\otimes s Q_{\gamma}(X_1, X_2)^k, 
\end{align*}
and it suffices to show that  
\begin{align*}
\sum_{s \in I_{\lambda}} (\gamma-1)s^{-1}\otimes s Q_{\gamma}(X_1, X_2)^k 
\in \psi(\mfZ_{\Gamma_1,2k} )
\end{align*}
for each integer $1 \leq \lambda \leq m$.  
Let $i \in I_\lambda$. For any integer $j \geq 0$, we have 
\begin{align*}
(\gamma-1)s_{\nu^j(i)}^{-1} \otimes s_{\nu^j(i)} Q_{\gamma}(X_1, X_2)^k
&= 
(\gamma-1)s_{\nu^j(i)}^{-1}g_{\nu^{j-1}(i)} \otimes g_{\nu^{j-1}(i)}^{-1} s_{\nu^j(i)}Q_{\gamma}(X_1, X_2)^k\\
&=
(\gamma-1) \gamma s_{\nu^{j-1}(i)}^{-1} \otimes s_{\nu^{j-1}(i)}Q_{\gamma}(X_1, X_2)^k, 
\end{align*}
where the second equality follows from the facts that  $\gamma \cdot Q_{\gamma}(X_1, X_2) = Q_\gamma(X_1, X_2)$ and ${s_{\nu^{j-1}(i)} \gamma^{-1}} = {g_{\nu^{j-1}(i)}^{-1} s_{\nu^j(i)}}$. 
Repeating this computation, we obtain
\begin{align}\label{eq:induction_nu_reduction}
(\gamma-1)s_{\nu^j(i)}^{-1}\otimes s_{\nu^j(i)} Q_{\gamma}^k
=
(\gamma-1)\gamma^j s_{i}^{-1} \otimes s_{i}Q_{\gamma}(X_1, X_2)^k. 
\end{align}
Since $\nu^{|I_\lambda|}(i) = i$, it follows that 
\begin{align*}
g := s_{i} \gamma^{|I_\lambda|}s_{i}^{-1} = g_{\nu^{|I_\lambda|-1}(i)}\cdots g_{\nu(i)}g_{i} \in \overline{\Gamma}_1. 
\end{align*}
The binary quadratic form $s_{i}Q_{\gamma}$ is primitive and $g$-invariant, and  hence 
\begin{align}\label{eq:induction_(s-1)(g-1)Q=0}
s_{i}Q_{\gamma}(X_1,X_2) =\pm Q_{g}(X_1,X_2), \qquad
(s_i^{-1}-1)(g-1) \otimes s_iQ_\gamma(X_1,X_2) = 0.     
\end{align}
Therefore, \eqref{eq:induction_nu_reduction}  and \eqref{eq:induction_(s-1)(g-1)Q=0} imply that  
\begin{align*}
\sum_{s \in I_{\lambda}} 
(\gamma-1)s^{-1}\otimes s Q_{\gamma}(X_1, X_2)^k
&= (\gamma^{|I_\lambda|}-1)s_i^{-1} \otimes s_{i}Q_{\gamma}(X_1, X_2)^k
\\
&= s_i^{-1}(g - 1) \otimes s_{i}Q_{\gamma}(X_1, X_2)^k
\\
&= (g - 1) \otimes s_{i}Q_{\gamma}(X_1, X_2)^k
\\
&= \pm \psi(\fz_{\Gamma_1, 2k}(g)). 
\end{align*}
\end{proof}


\section{Ordinary part}\label{sec:ordinary part}

\subsection{Definition of the ordinary part}\label{sec:Hecke operator}

Let $p$ denote a prime number and let $\LS$ be a finitely generated $\bZ_p$-module with a left $M_2^+(\Z)/\{\pm 1\}$-action. 
For any congruence subgroup $\Gamma$, we  put 
\[
e_p := \lim_{m\to \infty}[\,\Gamma \begin{pmatrix}1 & \\ & p\end{pmatrix} \Gamma\,]^{m!} \in \mathrm{End}_{\bZ_p}(H_i(\overline{\Gamma}, \LS)). 
\]
We then define the ordinary part of the homology group by 
\[
 H_{i}^{\rm ord}(\overline{\Gamma}, \LS) :=  e_pH_{i}(\overline{\Gamma}, \LS). 
\]
Since $e_p^2 = e_p$, we obtain a direct sum decomposition
\[
 H_{i}(\overline{\Gamma}, \LS)  =  H_{i}^{\rm ord}(\overline{\Gamma}, \LS)  \oplus  (1-e_p)H_{i}(\overline{\Gamma}, \LS). 
\]

\begin{dfn}
We define a map 
\[
\fz_{\Gamma, 2k}^{\pord} \colon \overline{\Gamma}^{\rm hyp} \to H_1^{\rm ord}(\overline{\Gamma}, \LS_{2k, \bZ_p}); \gamma \mapsto  e_p\fz_{\Gamma, 2k}(\gamma) 
\]
and also define a $\bZ_{p}$-submodule  $\mfZ_{\Gamma, 2k}^{\pord} \subset H_1^{\rm ord}(\overline{\Gamma}, \LS_{2k, \bZ_p})$   by 
\[
\mfZ_{\Gamma, 2k}^{\pord} := 
e_p(\mfZ_{\Gamma,2k}\otimes \Z_p)
=
\bigl\langle \, \fz_{\Gamma, 2k}^{\pord} (\gamma) \bigm| \gamma \in \overline{\Gamma}^{\rm hyp} \, \bigr\rangle_{\bZ_{p}}. 
\]

\end{dfn}

\subsection{Main result}

We now state the main result of this paper.

\begin{thm}\label{thm:main}
Let $p$ be a prime number and $N$ a positive integer. 
Let $\Gamma$ be a congruence subgroup satisfying $\Gamma_1(N) \subset \Gamma \subset \Gamma_0(N)$. 
Then  we have  
    \[
\mfZ_{\Gamma, 2k}^{\pord}  = H_1^\ord(\overline{\Gamma}, \LS_{2k, \bZ_{p}}). 
    \]
\end{thm}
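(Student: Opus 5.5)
The plan is to reduce everything to \cite[Theorem 9.22]{BS25} (Theorem~\ref{intro:N geq 5 and p mid N case fZ = H_1-ordinary}) by changing the level, using Proposition~\ref{prop:Hecke stability} as the only way to move hyperbolic cycles between levels. Since the inclusion $\mfZ_{\Gamma,2k}^{\pord}\subset H_1^\ord(\overline{\Gamma},\LS_{2k,\Z_p})$ is trivial, I only need surjectivity. I would fix an auxiliary level $M$ with $N\mid M$, $p\mid M$, $M\geq 4$, and the same prime-to-$p$ part as $\mathrm{lcm}(N,4)$. I then consider the chain of groups
\[
\Gamma_1(M)\subset \Gamma\cap\Gamma_1(M)\subset\Gamma\cap\Gamma_0(p)\subset \Gamma .
\]

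\emph{Step 1 (top of the chain).} By Theorem~\ref{intro:N geq 5 and p mid N case fZ = H_1-ordinary}, the ordinary part at level $\Gamma_1(M)$ is generated by $\fz^{\pord}_{\Gamma_1(M),2k}(\gamma)$.

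\emph{Step 2 (descent by corestriction).} I would push these cycles down by the double coset operators $[\Gamma' \,1\, \Gamma_1(M)]=\Cor$, and also by $[\Gamma'\alpha\Gamma_1(M)]$ with $\alpha=\mathrm{diag}(1,p^j)$. By Proposition~\ref{prop:Hecke stability}, both operators send $\mfZ$ to $\mfZ$. Moreover, $\Cor$ commutes with $U_p$ (resp.\ $T_p$) when the $p$-part of the level is unchanged, so it commutes with $e_p$. It therefore suffices to show that $\Cor$ maps ordinary parts onto ordinary parts. Going from $\Gamma_1(M)$ to $\Gamma\cap\Gamma_0(p^{v})$ is a descent along a group whose index may be divisible by $p$, so $\Cor\circ\Res$ is not invertible. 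I would handle it as follows: for ordinary $x$, write $x=U_p^n y$. Then factor $U_p^n=[\Gamma\alpha_n\Gamma]$ through $\Gamma\cap\alpha_n^{-1}\Gamma\alpha_n$, which lies inside a group with deep $p$-level where $\Gamma_1$- and $\Gamma_0$-structures agree modulo $p^n$. This exhibits $x$ as a corestriction from the deeper level, modulo $p^n$. Then I pass to the limit over $n$ using finite generation over $\Z_p$.

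\emph{Step 3 (case $p\nmid N$, $p$-stabilization).} I would compare $H_1^\ord(\overline{\Gamma})$ for $T_p$ with $H_1^\ord(\overline{\Gamma\cap\Gamma_0(p)})$ for $U_p$. The two maps are $\Res$ and $\beta_*\circ\Res$, where $\beta=\mathrm{diag}(p,1)$ up to $\Gamma_0(N)$. Conversely, I take $\Cor$ and $\Cor\circ\beta_*$ back to level $\Gamma$. The standard relations $\Cor\circ\Res=p+1$, $\Cor\circ\beta_*\circ\Res=T_p$, and $U_p^2-T_pU_p+p^{2k+1}\langle p\rangle=0$ on the image of the stabilization, together with $p^{2k+1}$ being non-unit, show the following: on ordinary parts, the map $(\Cor,\ \Cor\circ\beta_*)$ from level $\Gamma\cap\Gamma_0(p)$ to level $\Gamma$ is surjective. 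The key point is that the ordinary part at level $\Gamma$ maps isomorphically onto the unit-root stabilized piece. Both maps preserve $\mfZ$ by Proposition~\ref{prop:Hecke stability}, so the $p\nmid N$ case follows from the $p\mid N$ case applied to $\Gamma\cap\Gamma_0(p)$.

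The main obstacle is Step 2 for $p\mid N$: showing integrally, without inverting $p$, that corestriction from $\Gamma_1(M)$ to an intermediate $\Gamma$ is surjective on ordinary parts. I expect the argument via $U_p^n$ and deep $p$-level in Step 2 to be the delicate part. The prime-to-$p$ part of the index can simply be inverted in $\Z_p$, since $\Cor\circ\Res$ is multiplication by that index. The $p$-part of the index must be absorbed by the ordinary projector as indicated.
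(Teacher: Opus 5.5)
Your route differs from the paper's. For $p\mid N$, the paper does not use \cite[Theorem 9.22]{BS25}. It proves surjectivity at $\Gamma_1(N)$ directly from three ingredients: Hida's isomorphism $j_*$ (Proposition~\ref{prop:hida-redction-to-the-constant-sheaf-case}), the congruence $Q_\gamma\equiv\pm\frac{b}{\gcd(c,a-d,b)}X_2^2 \pmod p$ for $\gamma\notin\overline{\Gamma(p)}$, and the fact that such $\gamma$ generate $\overline{\Gamma_1(N)}$. So no auxiliary level is introduced. For $p\nmid N$, the paper uses only $\pi=\Cor$ and the mod-$p$ identities $\varphi\circ\pi=U_p+V$ with $V^2=VU_p=U_pV=0$, which give $T_p^n\circ\pi=\pi\circ U_p^n$. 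Your Step 3, via the quadratic Hecke relation and unit-root stabilization, is a heavier but acceptable substitute.

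The genuine gap is Step 2. Set $\Gamma''=\Gamma\cap\Gamma_0(p^v)$ and write an ordinary $x$ as $x=U_p^ny=\Cor\circ\alpha_{n*}\circ\Res(y)$. This only shows that $x$ is a corestriction from $\alpha_n\Gamma''\alpha_n^{-1}\cap\Gamma''=\Gamma\cap\Gamma_0(p^{v+n})$. For $n$ large, that group has the same image under $\gamma\mapsto d$ in $(\Z/M\Z)^\times$ as $\Gamma\cap\Gamma_0(M)$. Deepening a $\Gamma_0$-type level at $p$ never creates $\Gamma_1$-structure, since $\Gamma_0(p^n)/\Gamma_1(p^n)\cong(\Z/p^n\Z)^\times$ only grows. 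So the claim that the two structures ``agree modulo $p^n$'' is false. What your argument actually proves is weaker: ordinary classes at $\Gamma''$ are corestrictions of ordinary classes at $\Gamma\cap\Gamma_0(p^{v+n})$. That is true, and it holds exactly, with no limit needed, because $\Cor$ commutes with $e_p$.

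Moreover, the $p$-part of the index is not confined to the $p$-power level. For $N=11$, $p=5$, $\Gamma=\Gamma_0(11)$, the factor $5$ of $[\overline{\Gamma_0(55)}:\overline{\Gamma_1(55)}]=20$ comes from $(\Z/11\Z)^\times$. Your choice of $M$ also adds a non-normal change of prime-to-$p$ level whose index can be divisible by $p$: for $N=1$, $p=3$ you get $M=12$ and $[\overline{\Gamma_0(3)}:\overline{\Gamma_0(12)}]=6$. Inverting ``the prime-to-$p$ part of the index'' covers neither case.

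You were right to single this step out. The paper handles the analogous descent from $\Gamma_1(N)$ to $\Gamma$ in one line, asserting that $[\Gamma:\Gamma_1(N)]$ is prime to $p$. That holds for instance when $N=p$, but not in general: $[\overline{\Gamma_0(9)}:\overline{\Gamma_1(9)}]=3$.

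One way to close the gap in your framework:
\begin{itemize}
\item Take $M=Np^a$, enlarging only the $p$-part of the level.
\item Handle the inclusion $\Gamma\cap\Gamma_0(M)\subset\Gamma''$ by your $U_p^n$-factorization.
\item For the normal inclusion $\overline{\Gamma_1(M)}\subset G:=\overline{\Gamma\cap\Gamma_0(M)}$ with abelian quotient $\Delta$, use the five-term exact sequence of Hochschild--Serre. It identifies the cokernel of $\Cor$ on $H_1$ with $H_1(\Delta,H_0(\overline{\Gamma_1(M)},\LS_{2k,\Z_p}))$.
\end{itemize}
Two facts control the $U_p$-action on this cokernel. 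The representatives $T^j$ $(0\le j<p)$ of $(G\cap\alpha^{-1}G\alpha)\backslash G$ lie in $\Gamma_1(M)$. Conjugation by $\alpha=\pmat{1&0\\0&p}$ preserves the lower-right entry. Hence $U_p$ acts on the cokernel through $u=\sum_{j=0}^{p-1}\pmat{1&j\\0&p}$ on the coefficients, that is, $P\mapsto\sum_jP(pX_1-jX_2,X_2)$. One checks that $u^2\in p\,\mathrm{End}(\LS_{2k,\Z_p})$. So $e_p$ kills the cokernel, and $\Cor$ is surjective on ordinary parts.
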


The proof of Theorem \ref{thm:main} will be given in the following subsections (see Theorems \ref{thm:main-wild} and \ref{thm:main-tame}).

\subsection{The case where $p \mid N$}

Let $p$ be a prime number and $N \geq 1$ an integer satisfying $p \mid N$. 
By the definition of the congruence subgroup $\Gamma_1(p)$, we have a $\Gamma_1(p)$-homomorphism 
    \[
    j \colon \bF_p \to \LS_{2k, \bF_p}; \quad b \mapsto bX_2^{2k-2},
    \]
    which induces a Hecke-equivariant homomorphism  
    \[
    j_* \colon   H_1(\overline{\Gamma_1(N)}, \bF_p) \to H_1(\overline{\Gamma_1(N)}, \LS_{2k, \bF_p}). 
    \]

\begin{prop}\label{prop:hida-redction-to-the-constant-sheaf-case} 
The homomorphism $j$ induces a Hecke-equivariant isomorphism 
    \[
    j_* \colon   H_1^{\ord}(\overline{\Gamma_1(N)}, \bF_p) \stackrel{\sim}{\to} H_1^{\ord}(\overline{\Gamma_1(N)}, \LS_{2k, \bF_p}). 
    \]
\end{prop}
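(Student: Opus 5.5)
The plan is to put $j$ into a short exact sequence of coefficient modules and show that $U_p$ kills the homology of the cokernel, in every degree.

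\textbf{The short exact sequence.} I read the target of $j$ as the line spanned by $X_2^{2k}$; the exponent $2k-2$ looks like a misprint, since $\LS_{2k}$ consists of forms of degree $2k$. Let $\Delta$ be the semigroup generated by $\Gamma_1(N)$ and the matrices $\alpha_j:=\pmat{1&j\\0&p}$, and consider
\[
0 \to \bF_p \xrightarrow{\ j\ } \LS_{2k,\bF_p} \to C \to 0, \qquad C := \LS_{2k,\bF_p}/\bF_p X_2^{2k}.
\]
I first check that this is a sequence of $\Delta$-modules, with $\Delta$ acting trivially on $\bF_p$.

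For $\gamma=\pmat{a&b\\c&d}\in\Gamma_1(N)$ we have $c\equiv 0$ and $a\equiv 1 \pmod p$. Hence $\gamma X_2 = -cX_1+aX_2 \equiv X_2$, so $j$ is $\Gamma_1(N)$-equivariant.

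For $\alpha_j$, the definition of the action gives
\[
\alpha_j\phi(x_1,x_2)=\phi(px_1-jx_2,\,x_2).
\]
Thus $X_2\mapsto X_2$ and $X_1\mapsto pX_1-jX_2\equiv -jX_2 \pmod p$. Consequently $\alpha_j$ fixes $X_2^{2k}$ and maps every monomial $X_1^aX_2^{2k-a}$ into $\bF_pX_2^{2k}$. In other words, $\alpha_j$ acts on $\LS_{2k,\bF_p}$ with image contained in $j(\bF_p)$, so it acts by zero on $C$. The same reduction applies to $\bF_p$ itself, where each $\alpha_j$ acts as the identity.

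\textbf{Key step: $U_p$ vanishes on $H_i(\overline{\Gamma_1(N)},C)$ for all $i$.} Write
\[
U_p=\Cor\circ(\alpha_0)_*\circ\Res.
\]
Here $(\alpha_0)_*\colon H_i(\overline{\Gamma}_1,C)\to H_i(\overline{\Gamma}_2,C)$ is induced by the compatible pair consisting of conjugation $g\mapsto \alpha_0 g\alpha_0^{-1}$ on groups and $v\mapsto \alpha_0 v$ on coefficients. I expect this to be the main point needing care, namely justifying that $(\alpha_0)_*$ is functorial in the coefficient map. Compatibility means $\alpha_0 (gv)=(\alpha_0 g\alpha_0^{-1})(\alpha_0 v)$, which holds in the semigroup action. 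Since $v\mapsto\alpha_0 v$ is zero on $C$, it factors through the zero module, which is equivariant for the conjugation. Functoriality of group homology in pairs (group homomorphism, compatible module map) then gives $(\alpha_0)_*=0$ on $H_i(\cdot,C)$. The same factorization is valid in all degrees, via the effaceable $\delta$-functor description in the definition of the double coset operator. Hence $U_p=0$, so $e_p=0$ on $H_i(\overline{\Gamma_1(N)},C)$ for all $i$.

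\textbf{Conclusion.} The connecting maps in the long exact homology sequence
\[
H_2(\overline{\Gamma_1(N)},C)\to H_1(\overline{\Gamma_1(N)},\bF_p)\xrightarrow{j_*}H_1(\overline{\Gamma_1(N)},\LS_{2k,\bF_p})\to H_1(\overline{\Gamma_1(N)},C)
\]
commute with $U_p$. This holds because the sequence is one of $\Delta$-modules and the Hecke operators are morphisms of $\delta$-functors. The same compatibility shows that $j_*$ is Hecke-equivariant.

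Applying the exact idempotent functor $e_p$ to this sequence kills both outer terms by the key step. Therefore $j_*$ restricts to an isomorphism
\[
H_1^{\ord}(\overline{\Gamma_1(N)},\bF_p)\stackrel{\sim}{\to}H_1^{\ord}(\overline{\Gamma_1(N)},\LS_{2k,\bF_p}).
\]
No hypothesis on $H_2$ is needed, for instance in the presence of elliptic elements when $p=2,3$, since the vanishing is proved in every degree.
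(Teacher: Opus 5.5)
Your proposal is correct and follows the paper's proof. Both use the short exact sequence $0 \to \bF_p \to \LS_{2k,\bF_p} \to C \to 0$, observe that each $\begin{pmatrix}1&j\\0&p\end{pmatrix}$ maps $\LS_{2k,\bF_p}$ into $\bF_p X_2^{2k}$ and so kills $C$ (whence $e_p$ annihilates $H_1$ and $H_2$ of $C$), and conclude from the long exact sequence. Your extra care fills in points the paper leaves implicit: reading $j(b)=bX_2^{2k}$, working over the semigroup $\Delta$ rather than only $\Gamma_1(p)$, and getting the vanishing in degree $2$ from $U_p=\Cor\circ(\alpha_0)_*\circ\Res$ with $(\alpha_0)_*$ induced by the zero coefficient map.
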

\begin{proof}
The proof of this proposition is essentially parallel to that of \cite[Theorem 2 in \S7.2]{Hida93}, which is stated for cohomology groups.

We may assume that $k > 0$. 
If we put $\mcC := \LS_{2k,\bF_p}/ \bF_p X_2^{2k}$, then the short exact sequence $0 \to \bF_p \overset{j}{\to} \LS_{2k, \bF_p}  \to \mcC \to 0$ of left $\Gamma_1(p)$-modules induces 
an exact sequence of $\bF_p$-modules
\begin{align*}
    H_2(\overline{\Gamma_1(N)}, \mcC) \to H_1(\overline{\Gamma_1(N)}, \bF_p) \stackrel{j_*}{\to} H_1(\overline{\Gamma_1(N)}, \LS_{2k, \bF_p})  \to H_1(\overline{\Gamma_1(N)}, \mcC). 
\end{align*}
Since $k > 0$, for any element $u \in \Z$ and any polynomial $P(X_1, X_2) \in \LS_{2k, \bF_p}$, we have 
\begin{align*}
    \begin{pmatrix}
    1&u\\
    0&p
\end{pmatrix} \cdot P(X_1, X_2) = P(-uX_2, X_2) \in \bF_p  X_2^{2k}
\end{align*}
In other words, we have
\[
    \begin{pmatrix}
    1&u\\
    0&p
\end{pmatrix} \cdot \mcC = 0
\]
for any element $u \in \Z$. 
By the definition of the Hecke operator $U_p$ (see Remark \ref{rem:coset-decomposition}), it follows that $H_1^{\ord}(\overline{\Gamma_1(N)}, \mcC ) = H_2^{\ord}(\overline{\Gamma_1(N)}, \mcC ) = 0$,  which completes the proof. 
\end{proof}

\begin{thm}\label{thm:main-wild}
Let $p$ be a prime number and $N$ a positive integer such that  $p \mid N$. 
For any congruence subgroup $\Gamma$ satisfying $\Gamma_1(N) \subset \Gamma \subset \Gamma_0(N)$, we have 
\[
\mfZ_{\Gamma, 2k}^{\pord}  = H_1^\ord(\overline{\Gamma}, \LS_{2k, \bZ_{p}}). 
    \]
\end{thm}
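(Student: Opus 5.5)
The plan is to reduce to $\Gamma_1(N)$, then to mod $p$ coefficients, and finally to weight zero, using Proposition~\ref{prop:hida-redction-to-the-constant-sheaf-case}.

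\emph{Reduction to $\Gamma_1(N)$.} Corestriction $\Cor\colon H_1(\overline{\Gamma_1(N)},\LS_{2k,\Z_p})\to H_1(\overline{\Gamma},\LS_{2k,\Z_p})$ is the double coset operator $[\Gamma\cdot 1\cdot\Gamma_1(N)]$. It commutes with $U_p$: since $p\mid N$, the coset representatives $\begin{pmatrix}1&j\\0&p\end{pmatrix}$ of Remark~\ref{rem:coset-decomposition} work for both groups. Hence $\Cor$ commutes with $e_p$. By Proposition~\ref{prop:Hecke stability}, $\Cor$ sends $\mfZ_{\Gamma_1(N),2k}$ into $\mfZ_{\Gamma,2k}$. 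It is also surjective on ordinary parts. One argument: $\Cor\circ\Res$ is multiplication by $[\overline\Gamma:\overline{\Gamma_1(N)}]$. If this index is divisible by $p$, I would instead use that $\overline{\Gamma}/\overline{\Gamma_1(N)}$ is abelian and acts through diamond operators, which commute with $U_p$, and decompose over its characters after a small extension of scalars. Alternatively, I would argue directly that $H_1(\overline\Gamma,-)$ is a quotient of $H_1(\overline{\Gamma_1(N)},-)$ up to $H_0$-level terms, using the Hochschild--Serre five-term sequence with the finite quotient. The main case is therefore $\Gamma=\Gamma_1(N)$.

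\emph{Reduction to $\bF_p$.} The module $H_1^{\ord}(\overline{\Gamma_1(N)},\LS_{2k,\Z_p})$ is finitely generated over $\Z_p$. By Nakayama it suffices to show that $\mfZ^{\pord}$ surjects onto $H_1^{\ord}\otimes\bF_p$. Because $H_2$ of a free (or virtually free) group vanishes, $H_1(\overline{\Gamma_1(N)},\LS_{2k,\Z_p})/p \hookrightarrow H_1(\overline{\Gamma_1(N)},\LS_{2k,\bF_p})$ is injective. Torsion of $\overline{\Gamma_1(N)}$ appears only for $N\le 3$, and I would handle it separately. Applying $e_p$ then gives an injection on ordinary parts. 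So it is enough to show that $H_1^{\ord}(\overline{\Gamma_1(N)},\LS_{2k,\bF_p})$ is spanned by the reductions of $e_p\fz_{\Gamma_1(N),2k}(\gamma)$.

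\emph{Weight zero and cycle comparison.} By Proposition~\ref{prop:hida-redction-to-the-constant-sheaf-case}, every ordinary class mod $p$ has the form $j_*(x)$ with $x\in H_1^{\ord}(\overline{\Gamma_1(N)},\bF_p)$. In weight $0$ we have $H_1(\overline{\Gamma_1(N)},\Z)=\overline{\Gamma_1(N)}^{ab}$, and the classes $(\gamma-1)\otimes1$ for hyperbolic $\gamma$ generate it: any element is a product of hyperbolic elements, for instance by multiplying a parabolic element by a high power of a fixed hyperbolic element. It then remains to compare $j_*((\gamma-1)\otimes 1)=(\gamma-1)\otimes X_2^{2k}$ with $\fz(\gamma)=(\gamma-1)\otimes Q_\gamma^k$. (I read $j$ as $b\mapsto bX_2^{2k}$.) Here I would use that $U_p$ kills $\mcC$: modulo the image of $H_1(\mcC)$, which is annihilated by $e_p$, the class $(\gamma-1)\otimes Q_\gamma^k$ coincides with $(\gamma-1)\otimes c_\gamma X_2^{2k}$, where $c_\gamma$ is the $X_2^{2k}$-coefficient of $Q_\gamma^k$, namely $(\mp b/g)^k$. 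Applying $e_p$ gives $e_p\fz(\gamma)\equiv c_\gamma\, j_*e_p[(\gamma-1)\otimes1]$. For $\gamma=\begin{pmatrix}a&b\\c&d\end{pmatrix}\in\Gamma_1(N)$, the element $\gamma\begin{pmatrix}1&1\\0&1\end{pmatrix}^n$ has upper-right entry $b+na$, where $a\equiv1\pmod p$. Choosing $n$ suitably makes that entry a $p$-adic unit, hence $c\ne0$ in $\bF_p$, while staying hyperbolic.

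\emph{Expected obstacle.} The remaining problem is to show that such modified $\gamma$ still generate $H_1^{\ord}(\bF_p)$, and to make the projection argument rigorous. That argument needs the map $H_1(\LS)\to H_1(\mcC)$ together with the vanishing of $e_p$ on $H_1(\mcC)$, so that the difference $(\gamma-1)\otimes(Q^k-cX_2^{2k})$ really lies in the image of $j_*$. This last point is the step I expect to be hardest.
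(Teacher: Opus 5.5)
Your skeleton matches the paper's: pass to $\Gamma_1(N)$, reduce mod $p$ by Nakayama, and transport weight-$0$ classes by $j_*$ using Proposition~\ref{prop:hida-redction-to-the-constant-sheaf-case}. But the two steps that carry the proof are left open, and the mechanism you propose for the first one is not valid.

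\textbf{The cycle comparison.} The missing observation is this: for $\gamma=\pmat{a&b\\c&d}\in\overline{\Gamma_1(N)}^{\rm hyp}\setminus\overline{\Gamma(p)}$ with $p\mid N$, one has $c\equiv a-d\equiv 0$ and $b\not\equiv 0\pmod p$. Hence $p\nmid\gcd(c,a-d,b)$ and $Q_\gamma\equiv\pm\frac{b}{\gcd(c,a-d,b)}X_2^2\pmod p$. So $\fz_{\Gamma_1(N),2k}(\gamma)\bmod p$ is a unit times $j_*(\fz_{\Gamma_1(N),0}(\gamma)\bmod p)$ already at the level of chains. In other words, your difference $(\gamma-1)\otimes(Q_\gamma^k-c_\gamma X_2^{2k})$ is literally $0$ mod $p$.

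The route through $\mcC$ does not give this. $H_1(\overline{\Gamma_1(N)},\mcC)$ is a target of $H_1(\overline{\Gamma_1(N)},\LS_{2k,\bF_p})$, not a subgroup you can work modulo. Exactness together with $e_pH_1(\overline{\Gamma_1(N)},\mcC)=0$ only says that every ordinary class lies in $j_*(H_1^{\ord}(\overline{\Gamma_1(N)},\bF_p))$, which is just Proposition~\ref{prop:hida-redction-to-the-constant-sheaf-case} again. It does not identify the preimage of $e_p\fz_{\Gamma_1(N),2k}(\gamma)$. For $\gamma\in\Gamma(p)$, where $p$ can divide $\gcd(c,a-d,b)$, your leading-coefficient formula has no justification.

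\textbf{Generation.} So you must restrict to $\gamma\notin\Gamma(p)$, and then the second input is group-theoretic: $\overline{\Gamma_1(N)}$ is generated by $\overline{\Gamma_1(N)}^{\rm hyp}\setminus\overline{\Gamma(p)}$ (the paper quotes \cite[Lemma~9.19]{BS25}). These classes therefore span $H_1(\overline{\Gamma_1(N)},\bF_p)$, and applying $j_*$ and $e_p$ gives the surjection onto $H_1^{\ord}(\overline{\Gamma_1(N)},\LS_{2k,\bF_p})$. Your $\gamma T^n$ device does not yet give this, because $[\gamma T^n]=[\gamma]+n[T]$ and $[T]$ is the class of a parabolic element.

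\textbf{The passage from $\Gamma$ to $\Gamma_1(N)$.} You are right to worry that $p$ can divide $[\overline\Gamma:\overline{\Gamma_1(N)}]$: for $\Gamma=\Gamma_0(p^2)$ with $p$ odd the index is $p(p-1)/2$. The paper's proof simply asserts that this index is prime to $p$. Neither of your remedies closes the gap, though:
\begin{itemize}
\item splitting along characters of $G=\overline\Gamma/\overline{\Gamma_1(N)}$ needs $|G|$ invertible, which is exactly what fails;
\item with Hochschild--Serre you would still have to show that the cokernel $H_1(G,H_0(\overline{\Gamma_1(N)},\LS_{2k,\bF_p}))$ is killed by $e_p$.
\end{itemize}
A clean fix is to run the whole argument with $\Gamma\cap\Gamma_1(p)$ in place of $\Gamma_1(N)$. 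This group contains $\Gamma_1(N)$, and its index in $\Gamma$ divides $p-1$. Its elements are $\equiv\pm\pmat{1&*\\0&1}\pmod p$, so $j$ is equivariant and the congruence for $Q_\gamma$ above still holds. Also, $U_p$ has the same coset representatives. You then only need the analogous generation statement for this group.

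\textbf{A minor point.} The injection $H_1(\overline\Gamma,\LS_{2k,\bZ_p})\otimes_{\bZ_p}\bF_p\hookrightarrow H_1(\overline\Gamma,\LS_{2k,\bF_p})$ holds for every group, by the long exact sequence of $0\to\LS_{2k,\bZ_p}\xrightarrow{p}\LS_{2k,\bZ_p}\to\LS_{2k,\bF_p}\to 0$. Neither vanishing of $H_2$ nor a separate treatment of torsion for $N\le 3$ is needed.
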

\begin{proof}
We first claim that for any matrix $\gamma \in \overline{\Gamma_1(N)}^{\rm hyp} \setm \overline{\Gamma(p)}$, we have 
\begin{align}\label{eq:commutes-j-z}
    j_* ( \fz_{\Gamma_1(N), 0}(\gamma) \bmod{p}) \in \bF_p^{\times} \cdot \fz_{\Gamma_1(N), 2k}(\gamma). 
\end{align}
Since $p \mid N$, if we write $\gamma :=  \begin{pmatrix}
        a&b\\c&d
    \end{pmatrix} \in \overline{\Gamma_1(N)}^{\rm hyp} \setm \overline{\Gamma(p)}$, then we have 
    $c \equiv a-d \equiv 0 \pmod{p}$ and $b \not\equiv 0 \pmod{p}$, which shows that 
   \[
   Q_{\gamma}(X_1, X_2) \equiv \pm \frac{b}{\mathrm{gcd}(c,a-d,b)} X_2^2 \pmod{p}. 
   \]
Hence it follows that 
\begin{align*}
j_* ( \fz_{\Gamma_1(N), 0}(\gamma) \bmod{p})
&= \left(\pm \frac{\mathrm{gcd}(c,a-d,b)}{b} \right)^{k} \cdot  (\fz_{\Gamma_1(N), 2k}(\gamma) 
\bmod{p} ) 
\in \bF_p^{\times} \cdot \fz_{\Gamma_1(N), 2k}(\gamma). 
\end{align*}
The group $\overline{\Gamma_1(N)}$ is generated by 
$\overline{\Gamma_1(N)}^{\rm hyp} \setminus \overline{\Gamma(p)}$ 
(see, for example, \cite[Lemma~9.19]{BS25}). 
Hence, the claim~\eqref{eq:commutes-j-z} together with 
Proposition~\ref{prop:hida-redction-to-the-constant-sheaf-case} implies that the canonical homomorphism 
\begin{align}\label{eq:surj_ord}
\mfZ_{\Gamma_{1}(N), 2k}^{p\text{-ord}} \twoheadrightarrow H_1^{\ord}(\overline{\Gamma_1(N)}, \LS_{2k, \bF_{p}})     
\end{align}
is surjective. 
Take a congruence subgroup $\Gamma$ satisfying $\Gamma_1(N) \subset \Gamma \subset \Gamma_0(N)$.  We then have the following commutative diagram: 
    \[
    \xymatrix{
    H_1(\overline{\Gamma_1(N)}, \LS_{2k, \bF_{p}}) \ar[r]^-{U_p} \ar@{->>}[d] & H_1(\overline{\Gamma_1(N)}, \LS_{2k, \bF_{p}}) \ar@{->>}[d]
    \\
    H_1(\overline{\Gamma}, \LS_{2k, \bF_{p}}) \ar[r]^-{U_p} & H_1(\overline{\Gamma}, \LS_{2k, \bF_{p}}), 
    }
    \]
and the vertical homomorphisms are surjective since the index $[\Gamma \colon \Gamma_1(N)]$ is coprime to $p$ by the assumption $p \mid N$.
Therefore, it follows from \eqref{eq:surj_ord} that 
the canonical homomorphism 
\begin{align}\label{eq:surj_ord_2}
\mfZ_{\Gamma, 2k}^{p\text{-ord}} \twoheadrightarrow H_1^{\ord}(\overline{\Gamma}, \LS_{2k, \bF_{p}})     
\end{align}
is surjective. 
Since the canonical homomorphism
\begin{align}\label{eq:inj_homology_modp}
H_1(\overline{\Gamma}, \LS_{2k, \bZ_{p}})
\otimes_{\bZ_p} \bF_p
\hookrightarrow
H_1(\overline{\Gamma}, \LS_{2k, \bF_{p}})    
\end{align}
is injective, we obtain $\mfZ_{\Gamma, 2k}^{\pord} = H_1^\ord(\overline{\Gamma}, \LS_{2k, \bZ_{p}})$. 

\end{proof}

\begin{rmk}\label{rmk:isom_ord_reduction}
It follows from \eqref{eq:surj_ord_2} and \eqref{eq:inj_homology_modp} that $H_1^\ord(\overline{\Gamma}, \LS_{2k, \bZ_{p}}) \otimes_{\bZ_p} \bF_p \stackrel{\sim}{\to} H_1^\ord(\overline{\Gamma}, \LS_{2k, \bZ_{p}})$ is an isomorphism. 
This is a well-known fact in Hida theory (cf. \cite[the proof of Theorem 2 in \S 7.2]{Hida93}). 
\end{rmk}

\subsection{The case where $p \nmid N$}

Let $p$ be a prime number and $N \geq 1$ an integer satisfying $p \nmid N$. 
Let $\Gamma$ be a congruence subgroup satisfying $\Gamma_1(N) \subset \Gamma \subset \Gamma_0(N)$. 
For notational simplicity, we put 
\[
\Gamma' := \Gamma \cap \Gamma_0(p). 
\]
Let $\pi \colon   H_1(\overline{\Gamma'}, \LS_{2k, \bF_p}) \to  H_1(\overline{\Gamma}, \LS_{2k, \bF_p})$ denote the canonical homomorphism. 
We also define a homomorphism $\varphi \colon H_1(\overline{\Gamma}, \LS_{2k, \bF_p}) \to H_1(\overline{\Gamma'}, \LS_{2k, \bF_p})$ to be the double coset operator $[\Gamma' \begin{pmatrix} 1 & 0 \\ 0 & p \end{pmatrix}\Gamma]$: 
\[
\varphi := [\Gamma' \begin{pmatrix} 1 & 0 \\ 0 & p \end{pmatrix}\Gamma]. 
\]
In the following, we compute the composites $\pi \circ \varphi$ and $\varphi \circ \pi$.
Since the homomorphism  $\pi$ coincides with the double coset operator
\[
[\Gamma\begin{pmatrix}1&0\\0&1\end{pmatrix}\Gamma'],
\]
both compositions  $\pi \circ \varphi$ and $\varphi \circ \pi$ can be evaluated by applying the usual rules for composing double coset operators (see, for example, \cite[page 52]{Shimura-book-arith-autom-funct}).

\begin{lem}\label{lem:hecke1} 
We have $T_p = \pi \circ \varphi$ as double coset operators. 
\end{lem}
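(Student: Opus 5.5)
We compute $\pi\circ\varphi = [\Gamma\,\Gamma'\alpha\Gamma]$, with $\alpha := \pmat{1&0\\0&p}$, by the composition rule for double coset operators. The key identity will be $\Gamma\alpha\Gamma = \Gamma'\alpha\Gamma$ as subsets of $M_2^+(\Z)$, together with a comparison of the coset spaces of $\overline{\Gamma'}$ and $\overline{\Gamma}$ in this double coset.

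Since $\pi = [\Gamma\,1\,\Gamma']$ is the corestriction, the degree-zero description \eqref{eqn:double coset operator in degree 0} gives the composite $\pi\circ\varphi$ on $H_0$ as
\[
m \mapsto \sum_{i=1}^r \beta_i m,
\]
where $\{\beta_1,\dots,\beta_r\}$ is a set of representatives of $\overline{\Gamma'}\bs\overline{\Gamma'}\alpha\overline{\Gamma}$, viewed in $H_0(\overline{\Gamma},\LS)$. Two maps between effaceable $\delta$-functors that agree in degree $0$ agree in all degrees, so it suffices to prove the identity in degree $0$.

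Compare this with $T_p$, whose degree-zero map is $m\mapsto \sum_j \delta_j m$ over representatives $\delta_j$ of $\overline{\Gamma}\bs\overline{\Gamma}\alpha\overline{\Gamma}$. It therefore suffices to show that the natural map
\[
\overline{\Gamma'}\bs\overline{\Gamma'}\alpha\overline{\Gamma} \to \overline{\Gamma}\bs\overline{\Gamma}\alpha\overline{\Gamma}
\]
is a bijection.

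\emph{Surjectivity.} This amounts to $\Gamma\alpha\Gamma = \Gamma'\alpha\Gamma$, i.e.\ $\Gamma = \Gamma'\cdot(\Gamma\cap\alpha\Gamma\alpha^{-1})$. To see this:
\begin{itemize}
\item $\Gamma\cap\alpha\Gamma\alpha^{-1} = \Gamma\cap\Gamma^0(p)$, where $\Gamma^0(p)$ is the group with $b\equiv 0 \pmod p$.
\item $\Gamma' = \Gamma\cap\Gamma_0(p)$.
\item Since $p\nmid N$, strong approximation (the reduction map $\Gamma\to\SL_2(\Z/p)$ is surjective) reduces the claim to $\SL_2(\bF_p) = B\cdot B^{-}$ up to the $\Gamma(p)$ part, which can be absorbed into either factor.
\end{itemize}
One must check $B\cdot B^-=\SL_2(\bF_p)$ carefully. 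This product is the big cell only, so the claim is in fact false at the level of $\SL_2(\bF_p)$. The correct route is therefore to compare coset counts directly, as follows.

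\emph{Coset counts.} The number of right $\overline{\Gamma}$-cosets in $\overline{\Gamma}\alpha\overline{\Gamma}$ is $p+1$, by Remark~\ref{rem:coset-decomposition}. The number of $\overline{\Gamma'}$-cosets in $\overline{\Gamma'}\alpha\overline{\Gamma}$ is $[\overline{\Gamma}:\overline{\Gamma}\cap\alpha^{-1}\overline{\Gamma'}\alpha]$. One has $\alpha^{-1}\Gamma_0(p)\alpha \supset \Gamma_0(p^2)$, and a direct computation gives $\Gamma\cap\alpha^{-1}\Gamma'\alpha = \Gamma\cap\Gamma_0(p^2)$, of index $p(p+1)$ in $\Gamma$. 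This is larger than $p+1$, so $\pi\circ\varphi$ and $T_p$ cannot match by this naive coset bijection. Indeed, the representatives $\beta_i$ are then $\alpha s$ for $s$ running over $\Gamma_0(p^2)\bs\Gamma$, and their $\overline{\Gamma}$-classes cover each coset of $\overline{\Gamma}\bs\overline{\Gamma}\alpha\overline{\Gamma}$ with some multiplicity. Shimura's composition rule computes exactly these multiplicities as the coefficients of the product of double cosets.

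\emph{Main obstacle.} The hardest step is this multiplicity computation: showing that each coset in Remark~\ref{rem:coset-decomposition}, namely the $p$ cosets $\Gamma\pmat{1&j\\0&p}$ and the coset $\Gamma\beta\pmat{p&0\\0&1}$, appears with the same coefficient (which must be $1$ for the lemma to hold). This would follow from the identity $\Gamma\cdot\Gamma'\alpha\Gamma = \Gamma\alpha\Gamma$ together with an index computation $\deg(\Gamma'\alpha\Gamma)=\deg(\Gamma\alpha\Gamma)$. To get there, I would first recheck the definition of $\Gamma'$ against the direction of $\alpha$: the degree equality holds precisely when $\Gamma\cap\alpha^{-1}\Gamma'\alpha$ has index $p+1$, that is, when $\alpha^{-1}\Gamma'\alpha\supset\Gamma\cap\Gamma_0(p)$. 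I would verify this inclusion explicitly by conjugating matrices, and then conclude the degree-zero equality by exhibiting the $p+1$ representatives $\alpha s$ for $s\in\Gamma_0(p)\bs\Gamma$ and matching them with those of Remark~\ref{rem:coset-decomposition}.
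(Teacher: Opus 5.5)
Your reduction to degree $0$, and then to the bijectivity of the natural map $\overline{\Gamma'}\bs\overline{\Gamma'}\alpha\overline{\Gamma}\to\overline{\Gamma}\bs\overline{\Gamma}\alpha\overline{\Gamma}$, is exactly the paper's strategy. However, the proposal never proves this bijection, and the computations you give toward it are wrong because the conjugations are taken on the wrong side.

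Since $\alpha^{-1}\pmat{a&b\\c&d}\alpha=\pmat{a&pb\\c/p&d}$, one has $\alpha^{-1}\Gamma_0(p)\alpha=\Gamma^0(p)$, the group with upper-right entry divisible by $p$. Because $p\nmid N$, the level-$N$ conditions defining $\Gamma$ ($N\mid c$ and a condition on $d \bmod N$) are preserved, so $\Gamma\cap\alpha^{-1}\Gamma'\alpha=\Gamma\cap\Gamma^0(p)$. This has index $p+1$ in $\Gamma$, not $p(p+1)$. The group $\Gamma_0(p^2)$ you found is $\alpha\Gamma_0(p)\alpha^{-1}\cap\SL_2(\Z)$, the other conjugate. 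So the coset counts do agree, and your conclusion that the two operators ``cannot match by this naive coset bijection'' is unfounded, as is the multiplicity discussion that follows from it.

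The same inversion affects your surjectivity step. In fact $\Gamma\cap\alpha\Gamma\alpha^{-1}=\Gamma'$, so the set equality $\Gamma\alpha\Gamma=\Gamma'\alpha\Gamma$ is false, since it would force $\Gamma=\Gamma'$. It is also not what surjectivity needs: $\Gamma'x\mapsto\Gamma x$ is trivially surjective, because $\Gamma\gamma\alpha\delta=\Gamma\alpha\delta$ and $\alpha\delta\in\Gamma'\alpha\Gamma$.

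Finally, the inclusion $\alpha^{-1}\Gamma'\alpha\supset\Gamma\cap\Gamma_0(p)$ that your last paragraph proposes to verify is false. For example, $T=\pmat{1&1\\0&1}$ lies in $\Gamma\cap\Gamma_0(p)$, but $\alpha T\alpha^{-1}$ is not integral. Likewise, the representatives must be $\alpha s$ with $s$ running over $(\Gamma\cap\Gamma^0(p))\bs\Gamma$, not over $\Gamma_0(p)\bs\Gamma$. As written, the plan does not close.

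What is actually missing is injectivity, and with the correct index it is short. Take $s=\pmat{1&j\\0&1}$ for $0\le j\le p-1$, and $s=\pmat{mp&n\\N&1}$, which lies in $\Gamma_1(N)\subset\Gamma$ because $mp-nN=1$. Then $\alpha s$ runs through $\pmat{1&j\\0&p}$ and $\beta\pmat{p&0\\0&1}$. These $p+1$ elements of $\Gamma'\alpha\Gamma$ lie in pairwise distinct $\Gamma$-cosets by Remark~\ref{rem:coset-decomposition}, hence in distinct $\Gamma'$-cosets. Since $|\overline{\Gamma'}\bs\overline{\Gamma'}\alpha\overline{\Gamma}|=[\overline{\Gamma}:\overline{\Gamma\cap\Gamma^0(p)}]=p+1$, they exhaust it. This is precisely the decomposition $\Gamma'\alpha\Gamma=\bigsqcup_{j}\Gamma'\pmat{1&j\\0&p}\sqcup\Gamma'\beta\pmat{p&0\\0&1}$ on which the paper's proof rests. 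It gives the bijection of coset spaces, after which your degree-$0$ argument completes the proof.
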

\begin{proof}
Take a matrix $\beta := \begin{pmatrix}m&n\\N&p\end{pmatrix} \in \Gamma_0(N)$.
A direct computation gives the following coset decomposition: 
\[
    \Gamma' \begin{pmatrix} 1 & 0 \\ 0 & p \end{pmatrix}\Gamma = \bigsqcup_{j=0}^{p-1} \Gamma' 
\begin{pmatrix}
    1&j\\
    0&p
\end{pmatrix}  \sqcup   \Gamma'  \beta\begin{pmatrix}
    p&0\\
    0&1
\end{pmatrix}.  
\]
Hence the map $\Gamma' \begin{pmatrix} 1 & 0 \\ 0 & p \end{pmatrix}\Gamma \to \Gamma \begin{pmatrix} 1 & 0 \\ 0 & p \end{pmatrix}\Gamma; \, \Gamma' \alpha \mapsto \Gamma \alpha$ is a bijection. From this fact, the lemma follows. 
\end{proof}

\begin{lem}\label{lem:hecke2}
Take a matrix $\beta := \begin{pmatrix}m&n\\N&p\end{pmatrix} \in \Gamma_0(N)$ and set  $V := [\Gamma' \beta \begin{pmatrix} p & 0 \\ 0 & 1 \end{pmatrix} \Gamma']$. 
As operators on $H_1(\overline{\Gamma'}, \LS_{2k, \bF_p})$, we have the following identities: 
\begin{itemize}
\item[(1)] $\varphi \circ \pi = U_p + V$. 
    \item[(2)] If $k>0$, then $V^2 = 0$. 
    \item[(3)] If $k>0$, then $V \circ U_p = U_p \circ V = 0$.
\end{itemize}
\end{lem}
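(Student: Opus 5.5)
The plan is to compute every composition with the composition rule for double coset operators (\cite[page 52]{Shimura-book-arith-autom-funct}), as was done for $\pi$ and $\varphi$ in Lemma~\ref{lem:hecke1}. The vanishing statements (2) and (3) should then follow from how matrices that are highly degenerate mod $p$ act on $\LS_{2k,\bF_p}$. Throughout, $\Gamma'=\Gamma\cap\Gamma_0(p)$ has level divisible by $p$, so $U_p=[\Gamma'\,\mathrm{diag}(1,p)\,\Gamma']$ is given by the cosets $\Gamma'\pmat{1&j\\0&p}$ for $0\le j\le p-1$ (Remark~\ref{rem:coset-decomposition}).

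For (1), write $\pi=[\Gamma\,1\,\Gamma']$, which has the single right coset $\Gamma$. Then $\varphi\circ\pi$ is the operator attached to $\Gamma'\pmat{1&0\\0&p}\Gamma$, regarded as a union of $\Gamma'$-double cosets with multiplicity one. The coset decomposition in the proof of Lemma~\ref{lem:hecke1} splits this set as $\bigsqcup_j\Gamma'\pmat{1&j\\0&p}\sqcup\Gamma'\delta$, where $\delta:=\beta\pmat{p&0\\0&1}$. The first $p$ cosets form $\Gamma'\,\mathrm{diag}(1,p)\,\Gamma'$, giving $U_p$. For the last coset I will check that $\Gamma'\delta$ is right $\Gamma'$-stable, i.e.\ $\delta\Gamma'\delta^{-1}\cap\Gamma'$ has index one in $\delta\Gamma'\delta^{-1}$ after the obvious check mod $N$ and mod $p$, so that $\Gamma'\delta\Gamma'=\Gamma'\delta$ and the term is exactly $V$. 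Comparing degrees ($p+1=p+1$) confirms all multiplicities are $1$.

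For (2) and (3), I will reduce mod $p$. Every $\gamma\in\Gamma'$ is upper triangular mod $p$, while
\[
\delta=\pmat{mp&n\\Np&p}\equiv\pmat{0&n\\0&0}\pmod p
\quad\text{and}\quad
\pmat{1&j\\0&p}\equiv\pmat{1&j\\0&0}\pmod p .
\]
A matrix $\xi=\pmat{a&b\\c&d}$ acts by $P\mapsto P(dX_1-bX_2,-cX_1+aX_2)$. So if $\xi\equiv0\pmod p$, it kills $\LS_{2k,\bF_p}$ for $k>0$, since each monomial picks up a factor $p^{2k}$. For $V^2$, every element of $\Gamma'\delta\Gamma'\delta\Gamma'$ is congruent to
\[
\pmat{0&n\\0&0}\pmat{x&y\\0&z}\pmat{0&n\\0&0}=0 \pmod p .
\]
The composite is given by a single coset representative of this form, so $V^2$ acts by zero already on $H_0$, hence on $H_1$ by the universal $\delta$-functor property. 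The same computation handles the order with $\delta$ on the left: $\pmat{0&n\\0&0}\pmat{x&y\\0&z}\pmat{1&j\\0&0}\equiv0$.

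The main obstacle is the other order, in which products look like $\pmat{1&j\\0&0}\pmat{x&y\\0&z}\pmat{0&n\\0&0}\equiv\pmat{0&xn\\0&0}$ and need not vanish mod $p$. Such $\xi$ still has $a\equiv c\equiv d\equiv0$, so it sends $P$ to $P(-bX_2,0)$, a multiple of $X_2^{2k}$ determined by $b\bmod p$. Here I will write the composite explicitly as a sum over the $p$ right cosets $\pmat{1&j\\0&p}\gamma_j\delta$, or the reverse order depending on the convention. The goal is to show that all $p$ representatives have the same $b$ mod $p$, hence act identically on $H_0$, so the total is $p$ times one map, which is zero over $\bF_p$. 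If this bookkeeping fails, my fallback is to identify the composite double coset, of determinant $p^2$, with a specific operator such as $[\Gamma'\,p\delta'\,\Gamma']$ and show directly that its degree is divisible by $p$ while its action factors through a single vector.
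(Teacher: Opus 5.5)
\textbf{There is a genuine gap, and for the second half of (3) it cannot be closed.} Your argument for (1) is the paper's. For $V^2$ and $V\circ U_p$ your mod $p$ computation is also essentially the paper's: all products lie in $pM_2(\bZ)$, since $\delta^2\in pM_2(\bZ)$ and $\delta\pmat{1&0\\0&p}=p\beta$, where $\delta=\beta\pmat{p&0\\0&1}$.

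However, the step ``acts by zero already on $H_0$, hence on $H_1$ by the universal $\delta$-functor property'' is not a valid inference. The universal property compares transformations that are natural on an effaceable category of coefficient modules. Vanishing on $H_0(\overline{\Gamma'},\LS_{2k,\bF_p})$ for this one module says nothing about $H_1$. What does work, and what the paper implicitly uses, is the factorization $[\Gamma'\xi\Gamma']=\Cor\circ\xi_*\circ\Res$. For $\xi\in pM_2(\bZ)$ and $k>0$, the map $\xi_*$ is induced by the zero endomorphism of $\LS_{2k,\bF_p}$, so it vanishes in every degree. With this fix, (2) and $V\circ U_p=0$ are fine.

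For $U_p\circ V$ the distinction is decisive. You are right that here the products are $\equiv\pmat{0&*\\0&0}\pmod p$, not $\equiv 0$. In particular, the paper's inclusion $\Gamma'\pmat{1&0\\0&p}\Gamma'\beta\pmat{p&0\\0&1}\Gamma'\subset pM_2(\bZ)$ is false, because $\pmat{1&0\\0&p}\delta=\pmat{mp&n\\Np^2&p^2}$ with $p\nmid n$. Your ``$p$ equal terms'' cancellation does give $U_p\circ V=0$ on $H_0$, but it cannot be lifted, because $U_p\circ V\neq 0$ on $H_1$. Here is a counterexample.
\begin{itemize}
\item Take $N=1$ and $\Gamma'=\Gamma_0(p)$. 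Since $\delta$ normalizes $\Gamma'$, we have $V=\delta_*$.
\item Put $T=\pmat{1&1\\0&1}$, $g_0=\pmat{1&0\\-p&1}$, $z_0=(g_0-1)\otimes X_1^{2k}$ and $z_\infty=(T-1)\otimes X_2^{2k}$.
\item The element $\delta g_0\delta^{-1}$ generates the stabilizer of the cusp $\delta(0)=n/p=\gamma_1(\infty)$, where $\gamma_1=\pmat{n&*\\p&*}\in\Gamma_0(p)$. Also $\delta X_1^{2k}=(pX_1-nX_2)^{2k}=\gamma_1X_2^{2k}$. Hence $V(z_0)=(\gamma_1T^{\pm1}\gamma_1^{-1}-1)\otimes\gamma_1X_2^{2k}=\pm z_\infty$.
\item $U_p(z_\infty)=z_\infty$: the restriction is $(T^p-1)\otimes X_2^{2k}$, and conjugating by $\pmat{1&0\\0&p}$ returns $z_\infty$, as in the proof of Proposition~\ref{prop:boundary-Gamma_1}.
\item For $p=11$, the group $\overline{\Gamma_0(11)}$ has genus $1$, two cusps and no elliptic points. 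So it is free of rank $3$ with $T$ in a free basis. Hence $I_{\overline{\Gamma'}}\otimes_{\overline{\Gamma'}}\LS_{2k,\bF_{11}}\cong\LS_{2k,\bF_{11}}^{3}$, and $z_\infty\mapsto(X_2^{2k},0,0)\neq0$.
\end{itemize}
Thus $U_pV(z_0)=\pm z_\infty\neq 0$ for every $k>0$, so the second half of (3) is false. Neither your main plan nor your fallback (a degree divisible by $p$ and action through a single vector, which again only controls degree $0$) can establish it.

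What survives is $V^2=V\circ U_p=0$, and this suffices for Proposition~\ref{prop:image-ordinary}. It gives $(U_p+V)^n=U_p^{n-1}(U_p+V)$. It also gives $V=0$ on $H_1^{\rm ord}(\overline{\Gamma'},\LS_{2k,\bF_p})$, so $T_p\circ\pi=\pi\circ U_p$ there. Together these still yield $\pi(H_1^{\rm ord}(\overline{\Gamma'},\LS_{2k,\bF_p}))=H_1^{\rm ord}(\overline{\Gamma},\LS_{2k,\bF_p})$.
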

\begin{proof}
It follows from the fact  
\begin{align*}
\pmat{p&0\\0&1}^{-1}\beta^{-1}\Gamma' \beta \pmat{p&0\\0&1} = \Gamma' 
\end{align*}
that $\Gamma' \beta
\begin{pmatrix} p & 0 \\ 0 & 1 \end{pmatrix}
 \Gamma' = \Gamma'  \beta
\begin{pmatrix} p & 0 \\ 0 & 1 \end{pmatrix}$. 
Since $\Gamma' \begin{pmatrix} 1 & 0 \\ 0 & p \end{pmatrix}\Gamma = \bigsqcup_{j=0}^{p-1} \Gamma' 
\begin{pmatrix}
    1&j\\
    0&p
\end{pmatrix}  \sqcup   \Gamma'  \beta\begin{pmatrix}
    p&0\\
    0&1
\end{pmatrix}$, claim (1) follows immediately from the definitions of $U_p$, $V$, $\varphi$, and $\pi$. 
Since we assume $k > 0$, claim (2) follows from the fact that 
\[
\Gamma' \beta
\begin{pmatrix} p & 0 \\ 0 & 1 \end{pmatrix} \Gamma' \beta
\begin{pmatrix} p & 0 \\ 0 & 1 \end{pmatrix} \Gamma' \subset pM_2(\bZ). ;
\]
We now prove claim (3). 
Since 
\[
\Gamma' 
 \beta
\begin{pmatrix} p & 0 \\ 0 & 1 \end{pmatrix} \Gamma' \begin{pmatrix} 1 & 0 \\ 0 & p \end{pmatrix} \Gamma' = \Gamma'  \beta
\begin{pmatrix} p & 0 \\ 0 & 1 \end{pmatrix}\begin{pmatrix} 1 & 0 \\ 0 & p \end{pmatrix} \Gamma' \subset pM_2(\bZ),  
\]
we have  $V \circ U_p = 0$. 
Moreover, from the fact that $\Gamma' 
\begin{pmatrix} 1 & 0 \\ 0 & p \end{pmatrix} \Gamma' \beta
\begin{pmatrix} p & 0 \\ 0 & 1 \end{pmatrix} \Gamma' \subset pM_2(\bZ)$, it follows that $U_p \circ V = 0$. 
\end{proof}

\begin{prop}\label{prop:image-ordinary}
If $k > 0$, then we have  $\pi(H_1^{\rm ord}(\overline{\Gamma'}, \LS_{2k, \bF_p})) =  H_1^{\rm ord}(\overline{\Gamma}, \LS_{2k, \bF_p})$. 
    \end{prop}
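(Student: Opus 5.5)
The plan is to use Lemma~\ref{lem:hecke1} and Lemma~\ref{lem:hecke2} to relate the ordinary projectors on the two levels through $\pi$ and $\varphi$. On $H_1(\overline{\Gamma}, \LS_{2k,\bF_p})$ the ordinary projector is $e_p = \lim_m T_p^{m!}$, and on $H_1(\overline{\Gamma'}, \LS_{2k,\bF_p})$ it is $e_p' = \lim_m U_p^{m!}$. Here $\Gamma' = \Gamma\cap\Gamma_0(p)$ has level $Np$ with $p \mid Np$, so the relevant operator at level $\Gamma'$ is indeed $U_p$. All these groups are finite-dimensional over $\bF_p$, so the limits stabilize.

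First I show that $\pi$ intertwines the projectors, i.e.\ $\pi \circ U_p = T_p \circ \pi$ up to a correction that dies after powering. By Lemma~\ref{lem:hecke1} and Lemma~\ref{lem:hecke2}(1),
\[
T_p\circ\pi = \pi\circ\varphi\circ\pi = \pi\circ(U_p+V).
\]
Iterating this gives $T_p^n\circ \pi = \pi\circ (U_p+V)^n$.

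Next I expand $(U_p+V)^n$ using Lemma~\ref{lem:hecke2}(2),(3), which hold since $k>0$. Because $V^2=0$ and $VU_p=U_pV=0$, every mixed word containing both $U_p$ and $V$ vanishes, and so does every word with two or more factors of $V$. Hence for $n\ge 2$,
\[
(U_p+V)^n = U_p^n.
\]
Therefore $T_p^n\circ\pi = \pi\circ U_p^n$ for all $n \geq 2$, and passing to the limit over $n=m!$ gives $e_p\circ\pi = \pi\circ e_p'$. It follows at once that $\pi(H_1^{\ord}(\overline{\Gamma'})) = \pi e_p' H_1(\overline{\Gamma'}) = e_p\,\pi H_1(\overline{\Gamma'}) \subset H_1^{\ord}(\overline{\Gamma})$.

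For the reverse inclusion, I use that $T_p$ is invertible on the ordinary part of level $\Gamma$. Given $x\in H_1^{\ord}(\overline{\Gamma},\LS_{2k,\bF_p})$, write $x = T_p^{n} y$ with $y$ ordinary and $n \geq 2$. Then
\[
x = \pi\varphi\, T_p^{n-1} y,
\]
so $x = \pi(z)$ with $z = \varphi T_p^{n-1}y$. Applying $e_p$ and the intertwining,
\[
x = e_p x = e_p\pi(z) = \pi(e_p' z) \in \pi(H_1^{\ord}(\overline{\Gamma'})).
\]
The only delicate point is the claim that mixed words in $U_p,V$ vanish, which is exactly what Lemma~\ref{lem:hecke2}(2),(3) supply; beyond that the argument is formal. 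Note that surjectivity of $\pi$ itself is not needed, because $x = \pi\varphi(T_p^{n-1}y)$ already exhibits $x$ in the image of $\pi$.
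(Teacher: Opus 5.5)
Your proof is correct and rests on the same key identity as the paper: $T_p^n\circ\pi=\pi\circ(U_p+V)^n=\pi\circ U_p^n$ for $n\ge 2$, obtained from Lemmas~\ref{lem:hecke1} and~\ref{lem:hecke2}. The only difference is in showing that ordinary classes at level $\Gamma$ lie in the image of $\pi$. The paper uses that $\pi$ is surjective because $[\Gamma:\Gamma']$ is prime to $p$. You instead use the factorization $T_p=\pi\circ\varphi$ together with the invertibility of $T_p$ on $H_1^{\rm ord}(\overline{\Gamma},\LS_{2k,\bF_p})$, which indeed makes the index argument unnecessary.
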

\begin{proof}
Take a matrix $\beta := \begin{pmatrix}m&n\\N&p\end{pmatrix} \in \Gamma_0(N)$ and set  $V := [\Gamma' \beta \begin{pmatrix} p & 0 \\ 0 & 1 \end{pmatrix} \Gamma']$. 
For any integer $n \geq 1$, let $\psi_n := T_p^n  \circ \pi$. 
Since the index $[\Gamma \colon \Gamma']$ is coprime to $p$, the homomorphism $\pi$ is surjective, i.e., 
\[
\pi(H_1(\overline{\Gamma'}, \LS_{2k, \bF_p})) = H_1(\overline{\Gamma}, \LS_{2k, \bF_p}). 
\]
Hence, for any sufficiently large integer $n$, we have 
\[
\psi_n(H_1(\overline{\Gamma'}, \LS_{2k, \bF_p})) = T_p^n(H_1(\overline{\Gamma}, \LS_{2k, \bF_p})) = H_1^{\rm ord}(\overline{\Gamma}, \LS_{2k, \bF_p}). 
\]
Moreover, it follows from  Lemmas \ref{lem:hecke1} and \ref{lem:hecke2} that 
\[
\psi_n = (\pi \circ \varphi)^n \circ \pi = \pi \circ (\varphi \circ \pi)^n = \pi \circ (U_p + V)^n = \pi \circ U_p^n
\]
for any integer $n \geq 2$ since we assume that $k>0$. This fact implies that 
\[
\psi_n(H_1(\overline{\Gamma'}, \LS_{2k, \bF_p})) = \pi (U_p^n(H_1(\overline{\Gamma'}, \LS_{2k, \bF_p}))) = \pi(H_1^{\rm ord}(\overline{\Gamma'}, \LS_{2k, \bF_p})) 
\]
for any sufficiently large integer $n \geq 2$, and  $\pi(H_1^{\rm ord}(\overline{\Gamma'}, \LS_{2k, \bF_p})) = \psi_n(H_1(\overline{\Gamma'}, \LS_{2k, \bF_p})) = H_1^{\rm ord}(\overline{\Gamma}, \LS_{2k, \bF_p})$. 
\end{proof}

\begin{cor}\label{cor:image-ordinary}
If $k > 0$, then we have  $\pi(H_1^{\rm ord}(\overline{\Gamma'}, \LS_{2k, \bZ_p})) =  H_1^{\rm ord}(\overline{\Gamma}, \LS_{2k, \bZ_p})$. 
\end{cor}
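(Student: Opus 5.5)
We lift Proposition~\ref{prop:image-ordinary} from $\bF_p$ to $\bZ_p$ coefficients with a Nakayama argument. Two facts from the previous subsection feed in. First, $\pi$ commutes with $e_p$ up to the relation $T_p^n\circ\pi=\pi\circ U_p^n$ for $n\ge 2$, which comes from Lemmas~\ref{lem:hecke1} and \ref{lem:hecke2}. Second, the reduction maps are compatible with $\pi$ and with the Hecke operators. Throughout write $M:=H_1^{\rm ord}(\overline{\Gamma},\LS_{2k,\bZ_p})$ and $M':=H_1^{\rm ord}(\overline{\Gamma'},\LS_{2k,\bZ_p})$. These are finitely generated $\bZ_p$-modules, because the groups are finitely generated and the coefficients are finitely generated.

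\textbf{Step 1: $\pi$ maps $M'$ into $M$.} The identities $\pi\circ\varphi=T_p$ and $\varphi\circ\pi=U_p+V$, with $V^2=0$ and $VU_p=U_pV=0$ (Lemmas~\ref{lem:hecke1}, \ref{lem:hecke2}), are statements about composites of double coset operators. Their proofs only use coset decompositions and the fact that the relevant double cosets lie in $pM_2(\bZ)$, which kills $\LS_{2k}$ modulo $p$ when $k>0$. Over $\bZ_p$, therefore, $V^2$, $VU_p$ and $U_pV$ act divisibly by $p$, rather than vanishing. Even so, the first identity gives the exact relation $T_p^n\circ\pi=\pi\circ(U_p+V)^n$ over $\bZ_p$.

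For the inclusion itself there is a more direct route. The operator $\pi$ is corestriction, and corestriction is Hecke-equivariant for $T_p$ versus the $\Gamma'$-level operator built from the same coset data. A cleaner argument is to note that $e_p$ on $H_1(\overline{\Gamma},\cdot)$ is a $p$-adic limit of polynomials in $T_p$. Then $e_p\pi(x)=\lim T_p^{n!}\pi(x)=\lim \pi(\varphi\pi)^{n!-1}\varphi\pi(x)$. For $x\in M'$ we have $x=U_p^m y$ for large $m$, and this should let one conclude that $\pi(x)$ lies in $M$. I will not push this route, because Step 2 only needs the mod $p$ statement together with compatibility of reduction.

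\textbf{Step 2: surjectivity modulo $p$.} Consider the commutative square formed by $\pi$ over $\bZ_p$ and over $\bF_p$, together with the reduction maps. Reduction commutes with $e_p$, since $e_p$ is a limit of Hecke operators and reduction is Hecke-equivariant. The image of $e_p\pi(M')$ in $H_1(\overline{\Gamma},\LS_{2k,\bF_p})$ is $e_p\pi(\bar{M'})$. By Remark~\ref{rmk:isom_ord_reduction}, or rather its analogue, the reduction $M'\otimes\bF_p\to H_1^{\rm ord}(\overline{\Gamma'},\LS_{2k,\bF_p})$ is an injection, and its image is all of the ordinary part. Combined with Proposition~\ref{prop:image-ordinary}, this says the image of $e_p\pi(M')$ is all of $H_1^{\rm ord}(\overline{\Gamma},\LS_{2k,\bF_p})$. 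Since $M\otimes\bF_p\hookrightarrow H_1^{\rm ord}(\overline{\Gamma},\LS_{2k,\bF_p})$ is injective, $e_p\pi(M')+pM=M$, and Nakayama gives $e_p\pi(M')=M$.

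\textbf{Main obstacle.} The hard point is exactly the step just used: surjectivity of $M'\to H_1^{\rm ord}(\overline{\Gamma'},\LS_{2k,\bF_p})$. Since $\Gamma'\subset\Gamma_0(Np)$ and $p\mid Np$, I would get this from the proof of Theorem~\ref{thm:main-wild} and Remark~\ref{rmk:isom_ord_reduction}. That argument requires $\Gamma_1(Np)\subset\Gamma'$, which holds, together with $[\Gamma':\Gamma_1(Np)]$ prime to $p$. The index condition needs checking. If it fails, I would replace the reduction step by the long exact sequence for $0\to\LS_{2k,\bZ_p}\xrightarrow{p}\LS_{2k,\bZ_p}\to\LS_{2k,\bF_p}\to0$, applying the exact functor $e_p$ to it. The cokernel of reduction is then $e_pH_0(\overline{\Gamma'},\LS_{2k,\bZ_p})[p]$, and I would show this vanishes by the same $pM_2(\bZ)$-annihilation argument used in Proposition~\ref{prop:hida-redction-to-the-constant-sheaf-case}.

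Finally, we must see that $e_p\pi=\pi e_p'$ on $M'$, where $e_p'$ denotes the idempotent on the $\Gamma'$-side, so that $\pi(M')$ itself, and not merely $e_p\pi(M')$, equals $M$. For this I would use the $\bZ_p$-identity $T_p^n\pi=\pi(U_p+V)^n$. Expanding, and using that $V^2$ and the mixed products are divisible by $p$ (hence $p$-adically contracting relative to the unit-root part on ordinary vectors), the identity should pass to the limit over $n!$.
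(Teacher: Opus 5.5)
Your Step~2 is the paper's argument: reduce modulo $p$, apply Remark~\ref{rmk:isom_ord_reduction} to $\Gamma'$ (level $Np$) and Proposition~\ref{prop:image-ordinary}, use injectivity of $H_1(\overline{\Gamma},\LS_{2k,\bZ_p})\otimes_{\bZ_p}\bF_p\to H_1(\overline{\Gamma},\LS_{2k,\bF_p})$, and finish with Nakayama. It correctly yields $M=e_p\pi(M')$. Your fallback through $e_pH_0(\overline{\Gamma'},\LS_{2k,\bZ_p})[p]$ is also fine, since $U_p^2$ kills $H_0(\overline{\Gamma'},\LS_{2k,\bF_p})$.

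The gap is the remaining inclusion $\pi(M')\subset M$. The paper's one-line proof leaves this implicit; you rightly isolate it, but you do not establish it. Step~1 stops at ``this should let one conclude'', and the limit argument in your last paragraph fails. Even granting that $V^2$, $VU_p$ and $U_pV$ are all divisible by $p$, a word such as $U_p^{n-1}V$ in the expansion of $(U_p+V)^n$ contains only one such factor. So $(U_p+V)^n\equiv U_p^n$ holds only modulo $p$, not modulo powers of $p$ that grow with $n$. Passing to the limit over $n!$ therefore gives only $(1-e_p)\pi(x)\in pH_1(\overline{\Gamma},\LS_{2k,\bZ_p})$ for $x\in M'$, which you already had from the mod $p$ statement. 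As written, you have proved $e_p\pi(M')=M$, not $\pi(M')=M$. The weaker statement is in fact all that Theorem~\ref{thm:main-tame} needs, since $e_p$ is applied at the end anyway.

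What is missing is one more coset identity, which gives a genuine contraction. Put $\alpha'=\pmat{p&0\\0&1}$, $\alpha_j=\pmat{1&j\\0&p}$ and $u_j=\pmat{1&j\\0&1}$. Then $\beta\alpha'\alpha_j=p\,\beta u_j$ with $\beta u_j\beta^{-1}\in\Gamma$. Hence in degree $0$ the composite $\pi\circ V\circ U_p$ sends $m$ to $\sum_j p\,\beta u_j m=p^{2k+1}\beta m$ in $H_0(\overline{\Gamma},\LS_{2k})$. That is, as double coset operators, $\pi\circ V\circ U_p=p^{2k+1}\langle p\rangle\circ\pi$ with $\langle p\rangle=[\Gamma\beta\Gamma]$. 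Together with $T_p\circ\pi=\pi\circ(U_p+V)$ this gives
\[
\pi\circ U_p^2=T_p\circ\pi\circ U_p-p^{2k+1}\langle p\rangle\circ\pi .
\]
Since $U_p$ is bijective on $M'$, every $x\in M'$ equals $U_p^2y$ with $y,U_py\in M'$. Also $\langle p\rangle$ commutes with $T_p$, hence with $e_p$. Therefore the $\bZ_p[T_p,\langle p\rangle]$-module $L$ generated by $(1-e_p)\pi(M')$ satisfies $L=T_pL+pL$. Because $T_p$ is topologically nilpotent on $(1-e_p)H_1(\overline{\Gamma},\LS_{2k,\bZ_p})$, we have $T_p^cL\subset pL$ for some $c$. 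So $L=pL$, and $L=0$ by Nakayama, which is exactly $\pi(M')\subset M$.
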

\begin{proof}
 Since the canonical homomorphism $H_1(\overline{\Gamma}, \LS_{2k, \bZ_p}) \otimes_{\bZ_p}\bF_p \hookrightarrow H_1(\overline{\Gamma}, \LS_{2k, \bF_p})$ is injective, this corollary follows from Remark \ref{rmk:isom_ord_reduction} and Proposition \ref{prop:image-ordinary}. 
\end{proof}

\begin{thm}\label{thm:main-tame}
Let $p$ be a prime number and $N$ a positive integer such that  $p \nmid N$. 
Let $\Gamma$ be a congruence subgroup satisfying $\Gamma_1(N) \subset \Gamma \subset \Gamma_0(N)$. 
Then  we have 
    \[
\mfZ_{\Gamma, 2k}^{\pord}  = H_1^\ord(\overline{\Gamma}, \LS_{2k, \bZ_{p}}). 
    \]
\end{thm}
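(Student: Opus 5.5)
Handle $k>0$ and $k=0$ separately. The main case is $k>0$, where we reduce to Theorem~\ref{thm:main-wild} via $\Gamma' = \Gamma\cap\Gamma_0(p)$.

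For $k>0$, note that $\Gamma_1(Np)\subset\Gamma'\subset\Gamma_0(Np)$ and $p\mid Np$. Theorem~\ref{thm:main-wild} therefore applies to $\Gamma'$ and gives $\mfZ_{\Gamma',2k}^{\pord} = H_1^\ord(\overline{\Gamma'},\LS_{2k,\bZ_p})$. Corollary~\ref{cor:image-ordinary} states $\pi(H_1^\ord(\overline{\Gamma'},\LS_{2k,\bZ_p})) = H_1^\ord(\overline{\Gamma},\LS_{2k,\bZ_p})$; here $\pi$ is the $\bZ_p$-version of the corestriction, and the corollary is phrased with it. We need $\pi(e_p^{(\Gamma')}\mfZ_{\Gamma',2k}\otimes\bZ_p)\subset \mfZ^{\pord}_{\Gamma,2k}$. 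Since $\pi=[\Gamma\,1\,\Gamma']$, Proposition~\ref{prop:Hecke stability} gives $\pi(\mfZ_{\Gamma',2k})\subset\mfZ_{\Gamma,2k}$. The subtlety is that $e_p$ on $\Gamma'$ is built from $U_p$, whereas on $\Gamma$ it is built from $T_p$. By Lemmas~\ref{lem:hecke1} and~\ref{lem:hecke2}, for $n\ge 2$ we have
\[
T_p^n\circ\pi=\pi\circ(U_p+V)^n=\pi\circ U_p^n.
\]
The proof of Lemma~\ref{lem:hecke2} only uses double coset manipulations and the vanishing of matrices in $pM_2(\bZ)$ acting on $\LS_{2k}$ when $k>0$. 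I must check that $V^2=0$ and $U_pV=VU_p=0$ hold over $\bZ_p$, not just mod $p$, and this is the main obstacle. A product lying in $pM_2(\bZ)$ acts on $\LS_{2k}$ by multiplication by $p^{2k}$ times something, so it does not vanish over $\bZ_p$. Hence I will instead argue as follows.

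Work modulo $p$ and use Nakayama. From Theorem~\ref{thm:main-wild} and Remark~\ref{rmk:isom_ord_reduction}, the image of $\mfZ_{\Gamma',2k}$ under $U_p^n$ spans $H_1^\ord(\overline{\Gamma'},\LS_{2k,\bF_p})$ for large $n$. Over $\bF_p$, we have $T_p^n\pi=\pi U_p^n$, and $\pi$ maps hyperbolic cycles into $\mfZ_{\Gamma,2k}$. Therefore
\[
T_p^n\pi U_p^n(\mfZ_{\Gamma'}) = \pi U_p^{2n}(\mfZ_{\Gamma'})
\]
spans $\pi(H_1^\ord(\overline{\Gamma'},\LS_{2k,\bF_p}))$, which equals $H_1^\ord(\overline{\Gamma},\LS_{2k,\bF_p})$ by Proposition~\ref{prop:image-ordinary}. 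This vector space lies inside $T_p^n(\mfZ_{\Gamma,2k}\bmod p)$. Hence the reduction of $\mfZ^{\pord}_{\Gamma,2k}$ surjects onto $H_1^\ord(\overline{\Gamma},\LS_{2k,\bF_p})$. Using the injection $H_1(\overline{\Gamma},\LS_{2k,\bZ_p})\otimes\bF_p\hookrightarrow H_1(\overline{\Gamma},\LS_{2k,\bF_p})$, compatible with $e_p$, the submodule $\mfZ^{\pord}_{\Gamma,2k}$ surjects onto $H_1^\ord(\overline\Gamma,\LS_{2k,\bZ_p})\otimes\bF_p$. Nakayama's lemma for the finitely generated $\bZ_p$-module $H_1^\ord$ then gives equality.

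For $k=0$, $\LS_0=\bZ$ and $\fz_{\Gamma,0}(\gamma)$ is the class of $\gamma$ in $H_1(\overline\Gamma,\bZ)=\overline\Gamma^{ab}$. It then suffices that $\overline\Gamma$ is generated by hyperbolic elements. This follows as in \cite[Lemma~9.19]{BS25}: any parabolic or elliptic $g$ can be written as $(gh)h^{-1}$ with $h$ and $gh$ hyperbolic, by choosing $h$ a large power of a hyperbolic element in $\Gamma$, so that traces are large. In that case even $\mfZ_{\Gamma,0}=H_1$, and applying $e_p$ gives the claim.
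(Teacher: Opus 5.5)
Your proof is correct and follows the paper's route. For $k>0$ you pass to $\Gamma'=\Gamma\cap\Gamma_0(p)$, apply Theorem~\ref{thm:main-wild} there, and push forward along $\pi$ using Proposition~\ref{prop:Hecke stability}. For $k=0$ you use that $\overline{\Gamma}^{\mathrm{ab}}$ is generated by hyperbolic classes; in that sketch, write $h=\eta^m$ and choose the hyperbolic $\eta$ so that $g$ does not interchange the fixed points of $\eta$, since otherwise $\mathrm{trace}(g\eta^{m})$ can vanish for every $m$.

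The mod-$p$/Nakayama detour is unnecessary, because you never need $\pi$ to intertwine the two projectors $e_p$. Corollary~\ref{cor:image-ordinary} and Theorem~\ref{thm:main-wild} already give $H_1^{\mathrm{ord}}(\overline{\Gamma},\LS_{2k,\bZ_p})=\pi(\mfZ_{\Gamma',2k}^{\pord})\subset\mfZ_{\Gamma,2k}\otimes\bZ_p$, and applying $e_p$ for $\Gamma$ to this inclusion finishes the proof, which is what the paper does. Your argument in effect re-proves that corollary with $\mfZ$ in place of $H_1^{\mathrm{ord}}$.
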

\begin{proof}
When $k=0$, since we easily see that $\overline{\Gamma}^{\mathrm{ab}}$ is generated by $\overline{\Gamma}^{\mathrm{hyp}}$, 
it follows from the definition of the map $\mfz_{\Gamma,0}$ that
\[
\mfZ_{\Gamma,0} = H_1(\overline{\Gamma}, \LS_{0,\bZ_p}).
\]
    Hence we obtain $\mfZ_{\Gamma, 0}^{\pord}  = H_1^\ord(\overline{\Gamma}, \LS_{0, \bZ_{p}})$. 
    Next, let us suppose that $k>0$.  Then it follows from Theorem \ref{thm:main-wild}  and  Corollary \ref{cor:image-ordinary} that 
\[
H_1^{\rm ord}(\overline{\Gamma}, \LS_{2k, \bZ_{p}}) = \pi (H_1^{\rm ord}(\overline{\Gamma'}, \LS_{2k, \bZ_{p}})) = \pi(\mfZ_{\Gamma', 2k}^{p\textrm{-ord}}) \subset \pi(\mfZ_{\Gamma', 2k}) \subset \mfZ_{\Gamma, 2k}, 
\]
where the two inclusions follow from Proposition  \ref{prop:Hecke stability}. 
Thus we obtain $\mfZ_{\Gamma, 2k}^{p\textrm{-ord}}  = H_1^\ord(\overline{\Gamma}, \LS_{2k, \bZ_{p}})$. 
\end{proof}

\section{On the boundary part}

\subsection{Definition of the boundary part}

Let $\Gamma \subset \SL_2(\bZ)$ be a congruence subgroup. 
Let $I_2$ denote the $2$ by $2$ identity matrix. 
For any parabolic matrix $\gamma \in \overline{\Gamma} \setminus \{\pm I_2\}$,
that is, $|\mathrm{tr}(\gamma)| = 2$, we define a binary quadratic form
$Q_\gamma(X_1, X_2) \in \LS_2$ in the same manner as in Definition \ref{def:rademacher integral}. 
We set 
\[
\fz_{\Gamma, 2k}(\gamma) := (\gamma -1) \otimes Q_\gamma(X_1,X_2)^{2k} \in H_1(\overline{\Gamma}, \LS_{2k})
\]
for any non-negative integer $k$. 
We then define the boundary subgroup $H_1^\partial(\overline{\Gamma}, \LS_{2k})$
by the subgroup generated by $\fz_{\Gamma, 2k}(\gamma)$ for all parabolic matrices $\gamma \in \Gamma$: 
\[
H_1^{\partial}(\overline{\Gamma}, \LS_{2k}) := \bigl\langle \, \fz_{\Gamma, 2k}(\gamma) \bigm| \gamma \in \overline{\Gamma\setm \{\pm I_2\}} \textrm{ with } |\mathrm{trace}(\gamma)| =  2 \, \bigr\rangle_{\bZ}. 
\]
We remark that, from a geometric point of view, the boundary subgroup $H_1^\partial(\overline{\Gamma}, \LS_{2k})$ agrees with the subgroup arising from the boundary of the Borel--Serre compactification of the modular curve associated with  $\Gamma$. 
We also note that the boundary subgroups are compatible with the double coset operators: 

\begin{lem}
 Let $\Gamma $ and $\Gamma'$ be congruence subgroups of $\SL_2(\Z)$. 
For any matrix $\alpha \in M_2^+(\Z)$, the boundary subgroup $H_1^{\partial}(\overline{\Gamma}, \LS_{2k}) \subset H_1(\overline{\Gamma},\LS_{2k})$ maps to $H_1^{\partial}(\overline{\Gamma'}, \LS_{2k}) \subset H_1(\overline{\Gamma'},\LS_{2k})$ under the double coset operator $[\Gamma' \alpha \Gamma]$. 
\end{lem}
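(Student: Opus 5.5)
I will follow the proof of Proposition~\ref{prop:Hecke stability} verbatim, with hyperbolic matrices replaced by parabolic ones. The double coset operator $[\Gamma'\alpha\Gamma]$ factors as
\[
\Cor\circ\alpha_*\circ\Res \colon H_1(\overline{\Gamma},\LS_{2k}) \to H_1(\overline{\Gamma}_1,\LS_{2k}) \to H_1(\overline{\Gamma}_2,\LS_{2k}) \to H_1(\overline{\Gamma'},\LS_{2k}),
\]
where $\Gamma_1=\Gamma\cap\alpha^{-1}\Gamma'\alpha$ and $\Gamma_2=\alpha\Gamma\alpha^{-1}\cap\Gamma'$. It therefore suffices to check that each of the three maps sends boundary subgroups into boundary subgroups.

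\textbf{Corestriction and conjugation.} For $\Cor$ this is immediate: a parabolic element of $\overline{\Gamma}_2$ is a parabolic element of $\overline{\Gamma'}$, with the same $Q_\gamma$. For $\alpha_*$, take a parabolic $\gamma\in\overline{\Gamma}_1$. Then $\alpha\gamma\alpha^{-1}$ is again parabolic and not $\pm I_2$, since trace is preserved. The form $\alpha\cdot Q_\gamma$ is an integral $\alpha\gamma\alpha^{-1}$-invariant quadratic form. The key fact needed here is that for a parabolic $\delta\neq\pm I_2$, the integral $\delta$-invariant quadratic forms are exactly $\Z Q_\delta$. Indeed, $\delta$ is conjugate over $\Q$ to $\pm\pmat{1&h\\0&1}$, and a direct check shows its invariant forms in $\LS_2\otimes\Q$ form a one-dimensional space. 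Here $Q_\delta$ is the square of a linear form, namely the fixed line. Hence $\alpha\cdot Q_\gamma\in\Z Q_{\alpha\gamma\alpha^{-1}}$, and $\alpha_*$ sends $\fz_{\Gamma_1,2k}(\gamma)$ into $\Z\,\fz_{\Gamma_2,2k}(\alpha\gamma\alpha^{-1})$.

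\textbf{Restriction.} Here I reuse the orbit computation from the proof of Proposition~\ref{prop:Hecke stability} word for word. Under the isomorphism $\psi$, the class $\Res((\gamma-1)\otimes Q_\gamma^{m})$ (with whatever exponent $m$ appears in the definition) splits over the $\nu$-orbits $I_\lambda$. Each orbit contributes $(g-1)\otimes s_iQ_\gamma^{m}$, where $g=s_i\gamma^{|I_\lambda|}s_i^{-1}\in\overline{\Gamma}_1$.

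The only places the earlier proof used hyperbolicity are the following, and each survives in the parabolic case:
\begin{itemize}
\item $g$ must be of the right type. A power of a parabolic element $\neq\pm I_2$ is parabolic and $\neq\pm I_2$.
\item $s_iQ_\gamma$ must be primitive and $g$-invariant. Both hold.
\item We need $s_iQ_\gamma=\pm Q_g$. This follows from the uniqueness of primitive invariant forms noted above.
\end{itemize}
The identities \eqref{eq:induction_nu_reduction} and \eqref{eq:induction_(s-1)(g-1)Q=0} are purely formal, using only $\gamma Q_\gamma=Q_\gamma$ and $gs_iQ_\gamma=s_iQ_\gamma$. So each orbit contributes $\pm\psi(\fz_{\Gamma_1,2k}(g))$, which lies in $\psi(H_1^\partial(\overline{\Gamma}_1,\LS_{2k}))$.

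The step I expect to need the most care is the uniqueness of the primitive invariant form for a parabolic $\delta$, since this is what replaces the hyperbolic uniqueness statement. I would verify it by conjugating $\delta$ into $\pm\pmat{1&h\\0&1}$ with $h\neq0$ using $\SL_2(\Z)$. Under this conjugation the invariant forms must be multiples of $X_2^2$ in the paper's action convention. Transporting back shows that $Q_\delta$ equals $\pm$ the primitive generator.
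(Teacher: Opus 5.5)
Your proposal is correct and is exactly the paper's argument: the paper's proof consists of the single remark that, since conjugates of parabolic elements are parabolic, the proof of Proposition~\ref{prop:Hecke stability} goes through unchanged. You also check explicitly that the integral invariant forms of a parabolic $\delta\neq\pm I_2$ are exactly $\Z Q_\delta$, which gives both $s_iQ_\gamma=\pm Q_g$ and $\alpha\cdot Q_\gamma\in\Z Q_{\alpha\gamma\alpha^{-1}}$; this supplies the one ingredient the paper's one-line proof leaves implicit.
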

\begin{proof}
 Since the conjugate of a parabolic element is again parabolic, this lemma can be proved by exactly the same argument as in Proposition~\ref{prop:Hecke stability}. 
\end{proof}

\subsection{Structure of the boundary part}

Let $T := \begin{pmatrix}1&1\\0&1\end{pmatrix}$. 
As a Hecke module, the boundary subgroup has a particularly simple structure:

\begin{lem}\label{lem:boundary-hecke-generater}
Let $N$ be a positive integer. 
Let $\Gamma$ be a congruence subgroup satisfying $\Gamma(N) \subset \Gamma \subset \Gamma_1(N)$. 
Put $w := \min\{n > 0 \mid T^n \in \Gamma\}$. 
Then the boundary subgroup $H_1^{\partial}( \overline{\Gamma}, \LS_{2k})$ is generated by $\fz_{\Gamma, 2k}(T^w)$ as a module over the Hecke ring associated with $\Gamma$, that is, 
\[
H_1^{\partial}(\overline{\Gamma}, \LS_{2k}) = \bigl\langle  [\Gamma \alpha \Gamma] (\fz_{\Gamma, 2k}(T^w)) \bigm| \alpha \in M_2^+(\Z)  \bigr\rangle. 
\]
\end{lem}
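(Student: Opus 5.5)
We first note that the inclusion $\supseteq$ is immediate from the preceding lemma: $\fz_{\Gamma,2k}(T^w)$ lies in $H_1^\partial(\overline{\Gamma},\LS_{2k})$, and the double coset operators $[\Gamma\alpha\Gamma]$ preserve the boundary subgroup. For the reverse inclusion we must show that every generator $\fz_{\Gamma,2k}(\gamma)$, with $\gamma\in\overline{\Gamma}$ parabolic, is a Hecke translate of $\fz_{\Gamma,2k}(T^w)$, or a sum of such translates.

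The plan is to use the structure of parabolic elements. Any parabolic $\gamma\in\overline{\Gamma}$ fixes a cusp, so $\gamma = g T^{m} g^{-1}$ for some $g\in\SL_2(\Z)$ and some integer $m\neq 0$ (up to sign in $\PSL_2$). Since $\Gamma(N)\subset\Gamma$ is normal in $\SL_2(\Z)$, we have $\Gamma\subset\Gamma_1(N)$, and $T^m\in g^{-1}\Gamma g\supset\Gamma(N)$. We first reduce to the primitive element: the stabilizer of the cusp $g\infty$ in $\overline{\Gamma}$ is infinite cyclic, generated by some $\gamma_0=gT^{m_0}g^{-1}$. Writing $\gamma=\gamma_0^{n}$, the cycle $(\gamma_0^n-1)\otimes Q^k$ equals $n\,(\gamma_0-1)\otimes Q^k$ in $I_{\overline{\Gamma}}\otimes_{\overline{\Gamma}}\LS_{2k}$, because $Q=Q_{\gamma_0}$ is $\gamma_0$-invariant (telescoping as in the proof of Proposition~\ref{prop:Hecke stability}). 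Hence it suffices to treat the generators $\gamma_0$ of the cusp stabilizers. Note also that $Q_{T^m}=\pm X_2^2$, the same form for all $m$, so the coefficient never changes up to sign.

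The key step is to realize $\fz_{\Gamma,2k}(\gamma_0)$ as $[\Gamma\alpha\Gamma]\fz_{\Gamma,2k}(T^w)$, or as a suitable combination of such terms, for a well-chosen $\alpha\in M_2^+(\Z)$. By the decomposition $\Res$, $\alpha_*$, $\Cor$ and the orbit computation in Proposition~\ref{prop:Hecke stability}, $[\Gamma\alpha\Gamma]\fz_{\Gamma,2k}(T^w)$ is the sum over $\langle T^w\rangle$-orbits on $\Gamma_1\backslash\Gamma$ of terms $\pm\fz_{\Gamma,2k}(\alpha s_i T^{w\ell_i}s_i^{-1}\alpha^{-1})$, with $\Gamma_1=\Gamma\cap\alpha^{-1}\Gamma\alpha$. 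We will try $\alpha = g\cdot\mathrm{diag}(m_0/w',\ldots)$; concretely, take $\alpha\in M_2^+(\Z)$ with $\alpha\infty=g\infty$, for instance $\alpha = g\pmat{a&0\\0&1}$ after rescaling, so that $\alpha T^w\alpha^{-1}$ is a power of $\gamma_0$. Then the orbit of the trivial coset produces $\fz_{\Gamma,2k}(\gamma_0)$ up to a multiple. The other orbits give cycles attached to other cusps.

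The main obstacle is controlling these extra orbit terms and the multiplicities; this is the hard part. To handle it, we would first take $\alpha=\mathrm{diag}(1,1)$-type elements to generate the cusp $\infty$, and then use $\alpha=\pmat{a&b\\c&d}$ with $\alpha\equiv g \pmod N$ and $\det\alpha$ equal to a prime $\ell\equiv1\pmod N$ but $\ell\nmid N$. With this choice $\alpha$ normalizes things mod $N$, so the double coset $\Gamma\alpha\Gamma$ behaves like $T_\ell$ twisted by $g$. The coset sum then gives $\fz(\gamma_0)$ plus $\ell$ terms at cusps in the $\Gamma$-orbit of $\infty$. The multiplicities involve $\ell$, and we vary over two such primes $\ell,\ell'$ with coprime multiplicities. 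Subtracting known multiples of $\fz(T^w)$ then isolates $\fz_{\Gamma,2k}(\gamma_0)$ integrally.
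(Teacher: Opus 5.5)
Your inclusion $\supseteq$ and your reduction to a primitive generator $\gamma_0=gT^{m_0}g^{-1}$ of a cusp stabilizer agree with the paper. The decisive step, however, is never carried out: you call it ``the hard part'', and the workaround you sketch does not work as described.

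Take $\alpha\equiv g \pmod N$ with $\det\alpha=\ell\equiv 1\pmod N$. Every cusp $\alpha s\infty$ with $s\in\Gamma$ that occurs in the orbit sum has primitive vector congruent to $g\binom{1}{0}$ modulo $N$. This holds because $s\binom{1}{0}\equiv\binom{1}{0}$ and $\ell\equiv 1$, so dividing by $\ell$ when necessary changes nothing mod $N$. Hence all these cusps are $\Gamma$-equivalent to $g\infty$, not to $\infty$. So $[\Gamma\alpha\Gamma]\fz_{\Gamma,2k}(T^w)$ is just an integer multiple of $\fz_{\Gamma,2k}(\gamma_0)$, and there are no ``known multiples of $\fz_{\Gamma,2k}(T^w)$'' to subtract.

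Nor can the multiplicities be made coprime. Take $\Gamma=\Gamma(N)$. Since $g$ normalizes $\Gamma(N)$, you have $[\Gamma\alpha\Gamma]=g_*\circ[\Gamma\alpha'\Gamma]$ with $\alpha'=g^{-1}\alpha\equiv 1\pmod N$. Compute as in the proof of Proposition~\ref{prop:boundary-Gamma_1}: the cosets $\pmat{1&Nj\\0&\ell}$, $0\le j<\ell$, form a single $T^N$-orbit contributing $1$, and $\pmat{\ell&0\\0&1}$ contributes $\ell^{2k+1}$. The result is $\pm(1+\ell^{2k+1})\fz_{\Gamma,2k}(\gamma_0)$, which is even for every odd $\ell$, and every prime $\ell\equiv1\pmod N$ is odd once $N\ge 2$.

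The missing idea is that no determinant-$\ell$ detour is needed. Take $\alpha=g\in\SL_2(\bZ)$ itself, which is your $g\,\mathrm{diag}(a,1)$ with $a=1$; then there are no ``other orbits''. Indeed, $\Gamma(N)$ is normal in $\SL_2(\bZ)$, so $\Gamma(N)\subset\Gamma\cap g^{-1}\Gamma g$. Since $\Gamma_1(N)/\Gamma(N)$ is cyclic, generated by the image of $T$, the hypothesis $\Gamma(N)\subset\Gamma\subset\Gamma_1(N)$ gives $\Gamma=\Gamma(N)\,T^{w\bZ}$. Hence $T^{w\bZ}$ acts transitively on $(\Gamma\cap g^{-1}\Gamma g)\backslash\Gamma$, with representatives $T^{wj}$, $0\le j<h$, where $T^{wh}$ generates $T^{w\bZ}\cap g^{-1}\Gamma g$. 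One checks that $w\mid m_0$, so $gT^{wh}g^{-1}=\gamma_0^{\pm1}$ in $\PSL_2(\bZ)$. The orbit computation of Proposition~\ref{prop:Hecke stability} then has exactly one orbit and gives
\[
[\Gamma g\Gamma]\,\fz_{\Gamma,2k}(T^w)=g_*\bigl((T^{wh}-1)\otimes X_2^{2k}\bigr)=\pm\fz_{\Gamma,2k}(\gamma_0)
\]
on the nose. This single-orbit argument is where the hypothesis on $\Gamma$ is genuinely used. In your proposal it appears only through the inference that normality of $\Gamma(N)$ gives $\Gamma\subset\Gamma_1(N)$. That inference is invalid, though harmless, since $\Gamma\subset\Gamma_1(N)$ is a hypothesis.
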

\begin{proof}
    Let $\gamma \in \Gamma$ be a parabolic matrix.  Then it is easy to see that there exists  $\alpha \in \SL_2(\bZ)$ such that $\alpha^{-1}\gamma \alpha \in T^{w\bZ}$. 
    Since $\fz_{\Gamma, 2k}(\gamma^d) = d \cdot \fz_{\Gamma, 2k}(\gamma)$ for any  integer $d$, we may assume that $T^{w\bZ} \cap \alpha^{-1}\Gamma\alpha$ is generated by $\alpha^{-1}\gamma \alpha$ and that there is a positive integer $h$ satisfying $T^{wh} = \alpha^{-1}\gamma \alpha$. 
    Let us show that 
    \[
        \fz_{\Gamma, 2k}(\gamma) = [\Gamma \alpha \Gamma](\fz_{\Gamma, 2k}(T^w)). 
    \]
    Since $\Gamma(N) \subset \Gamma \subset \Gamma_1(N)$, we have $\Gamma(N) \cdot T^{w\bZ} = \Gamma$. 
    Hence we have a canonical bijection 
    \[
    (T^{w\bZ} \cap \alpha^{-1}\Gamma\alpha) \bs T^{w\bZ} \stackrel{\sim}{\to} (\Gamma \cap \alpha^{-1}\Gamma\alpha) \bs \Gamma. 
    \]
    In particular, the set $\{ T^{wj} \mid 0 \leq j < h\}$ is a complete set of representative of $(\Gamma \cap \alpha^{-1}\Gamma\alpha) \bs \Gamma$. 
    This shows that 
    \begin{align*}
        [\Gamma \alpha \Gamma] \cdot \fz_{\Gamma, 2k}(T^w) 
        &= \alpha_* \left( \sum_{j=0}^{h-1} T^{wj} (T^w-1) \otimes X_2^{2k} \right)
        \\
        &= \alpha_*((T^{wh}-1) \otimes X_2^{2k})
        \\
        &= \pm \fz_{\Gamma, 2k}(\gamma), 
    \end{align*}
    where the last equality follows from the fact that $T^{wh} = \alpha^{-1} \gamma \alpha$. 
\end{proof}

The following proposition follows as an application of Theorem \ref{thm:main}.

\begin{prop}\label{prop:boundary-Gamma_1}
Let $N$ be a positive integer. 
Let $\Gamma$ be a congruence subgroup satisfying $\Gamma(N) \subset \Gamma \subset \Gamma_1(N)$. 
Then the boundary subgroup $H_1^{\partial}(\overline{\Gamma_1(N)}, \LS_{2k})$ is contained in $\fZ_{\Gamma_1(N), 2k} \otimes_{\bZ} \bZ[1/N]$. 
\end{prop}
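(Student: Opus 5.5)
The plan is to reduce to a single cycle and then work one prime at a time. By Lemma~\ref{lem:boundary-hecke-generater} applied to $\Gamma_1(N)$ itself (so $w=1$), the boundary subgroup $H_1^{\partial}(\overline{\Gamma_1(N)},\LS_{2k})$ is generated over the Hecke ring by $\fz_{\Gamma_1(N),2k}(T)=(T-1)\otimes X_2^{2k}$. By Proposition~\ref{prop:Hecke stability}, every $[\Gamma_1(N)\alpha\Gamma_1(N)]$ preserves $\fZ_{\Gamma_1(N),2k}$, and hence also $\fZ_{\Gamma_1(N),2k}\otimes\bZ[1/N]$. It therefore suffices to show that $x:=\fz_{\Gamma_1(N),2k}(T)$ lies in $\fZ_{\Gamma_1(N),2k}\otimes\bZ[1/N]$.

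Next I would localize. The module $M:=H_1(\overline{\Gamma_1(N)},\LS_{2k})$ is finitely generated over $\bZ$, and $\bZ_p$ is faithfully flat over $\bZ_{(p)}$. Hence for the submodule $\fZ:=\fZ_{\Gamma_1(N),2k}$ we have $M_{(p)}\cap(\fZ\otimes\bZ_p)=\fZ\otimes\bZ_{(p)}$. Moreover, for a finitely generated module,
\[
\bigcap_{p\nmid N}\bigl(\fZ\otimes\bZ_{(p)}\bigr)=\fZ\otimes\bZ[1/N]
\]
inside $M\otimes\bQ$: the image of $x$ in the finite group $(M/\fZ)\otimes\bZ[1/N]$ vanishes at every localization, so it vanishes. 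It therefore suffices to prove, for every prime $p\nmid N$, that $x\otimes1\in\fZ\otimes\bZ_p$. By Theorem~\ref{thm:main} (in its tame case, Theorem~\ref{thm:main-tame}), $H_1^{\ord}=\fZ^{\pord}\subset\fZ\otimes\bZ_p$. So the whole problem reduces to showing that the $\bZ_p$-module $B_p:=H_1^{\partial}(\overline{\Gamma_1(N)},\LS_{2k})\otimes\bZ_p$, which is $T_p$-stable by the boundary analogue of Proposition~\ref{prop:Hecke stability}, is contained in the ordinary part. Equivalently, $T_p$ should act invertibly on the finitely generated $\bZ_p$-module $B_p$. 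By Nakayama it is enough that $T_p$ be surjective on $B_p/pB_p$, which holds if $T_p$ acts on it unipotently up to a unit.

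The main obstacle is the explicit computation of $T_p$ on the generators $\fz(\gamma)$ with $\gamma$ parabolic, using the coset decomposition of Remark~\ref{rem:coset-decomposition} together with the $\Res$, $\alpha_*$, $\Cor$ description. I expect the following. The $p$ cosets $\pmat{1&j\\0&p}$ sum, after restriction, to a single cycle $\alpha_*((T^{p}-1)\otimes X_2^{2k})$ for $\alpha=\pmat{1&0\\0&p}$, and the polynomial $\alpha\cdot X_2^{2k}=X_2^{2k}$ is unchanged. The conjugate $\alpha T^p\alpha^{-1}$ equals $\pmat{1&1\\0&1}$, so this part gives exactly $\fz(T)$. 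The remaining coset $\beta\pmat{p&0\\0&1}$ conjugates $T$ into $\beta T^{p}\beta^{-1}$ (up to the cusp change), while the coefficient $X_2^{2k}$ picks up a factor $p^{2k}$. Modulo $p$ (using $k>0$; the case $k=0$ is already covered since $\fZ_{\Gamma,0}=H_1$), I therefore expect $T_p\,\fz(\gamma)\equiv\fz(\gamma')\pmod p$, where $\gamma'$ is a parabolic element at a cusp permuted by a diamond-type operator. The same should hold at every cusp after conjugating by $\alpha\in\SL_2(\bZ)$ as in Lemma~\ref{lem:boundary-hecke-generater}. Thus $T_p$ acts on $B_p/pB_p$ as a permutation of the spanning set of cusp cycles, which is invertible. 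This gives $B_p\subset H_1^{\ord}$, and hence the proposition. If the bookkeeping of cusps becomes messy, I would instead check surjectivity directly: show that each generator $\fz(\gamma)$ modulo $p$ lies in the image of $T_p$, by choosing $\gamma$ of the form $\alpha T^{p}\alpha^{-1}$.
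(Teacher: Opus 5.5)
Your skeleton matches the paper's. Lemma~\ref{lem:boundary-hecke-generater} (with $w=1$) and Proposition~\ref{prop:Hecke stability} reduce everything to $x=\fz_{\Gamma_1(N),2k}(T)$; one then argues prime by prime for $p\nmid N$; and Theorem~\ref{thm:main} gives $H_1^{\ord}=\fZ^{\pord}\subset\fZ\otimes\bZ_p$. Your coset computation at $\infty$ is also right. It amounts to the paper's identity $T_px=x+p^{2k+1}\langle p\rangle x$ with $\langle p\rangle=\beta_*$, which the paper proves at level $\Gamma_1(N^2)$ and which holds identically at level $\Gamma_1(N)$.

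The gap is the passage from this identity to ``$x$ is ordinary''. You replace it by the stronger claim that $T_p$ is invertible on all of $B_p$, and that claim is only announced. Your picture is in fact correct: for $\gamma_0=\pmat{1&0\\-N&1}$ the same orbit computation gives $T_p\fz(\gamma_0)=\langle p\rangle\fz(\gamma_0)+p^{2k+1}\fz(\gamma_0)$. But proving the claim requires three things you do not do:
\begin{itemize}
\item redo the orbit decomposition of the $p+1$ cosets at every cusp;
\item check that the fixed coset contributes an element of $pB_p$;
\item show that the induced map on cusp classes is a bijection.
\end{itemize}
It also cannot be shortcut by ``conjugating by $\alpha\in\SL_2(\bZ)$''. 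Writing $\fz(\gamma_s)=\pm[\Gamma\alpha\Gamma]x$ and moving $T_p$ across $[\Gamma\alpha\Gamma]$ is not legitimate, because these operators do not commute in general. Compare the two formulas above with $\alpha=\pmat{0&-1\\1&0}$ and $p\not\equiv\pm1 \pmod N$.

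The computation at $\infty$ alone gives $T_px\equiv x\pmod{pB_p}$, which by itself does not force $e_px=x$. What closes the gap, and makes all other cusps irrelevant, is the fact used in the paper that $T_p$ commutes with $\langle p\rangle$. Then $T_p^mx=(1+p^{2k+1}\langle p\rangle)^mx$ for all $m$, so $e_px=\lim_n(1+p^{2k+1}\langle p\rangle)^{n!}x=x$, i.e.\ $x\in\fZ^{\pord}\subset\fZ\otimes\bZ_p$. Your first paragraph then finishes the proof, using Hecke stability of $\fZ$ (not of the ordinary part). There is no need to show that the whole boundary is ordinary.

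Two smaller points.
\begin{itemize}
\item The conclusion of the statement as printed names $\Gamma_1(N)$, but the paper proves $H_1^{\partial}(\overline{\Gamma},\LS_{2k})\subset\fZ_{\Gamma,2k}\otimes\bZ[1/N]$ for every $\Gamma(N)\subset\Gamma\subset\Gamma_1(N)$. Corollary~\ref{cor:boundary-rational} needs this for $\Gamma=\Gamma(N)$, where Theorem~\ref{thm:main} is unavailable. The paper reduces to $\Gamma(N)$ via $\gamma^N\in\Gamma(N)$, then pushes forward from level $\Gamma_1(N^2)$ by $[\Gamma\pmat{N&0\\0&1}\Gamma_1(N^2)]$, which is surjective on $H_1$, on the boundary and on $\fZ$ after inverting $N$. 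Your argument only covers $\Gamma=\Gamma_1(N)$.
\item In the localization step you should not call $(M/\fZ)\otimes\bZ[1/N]$ finite: that is Theorem~\ref{thm:quotient-torsion}, which rests on this proposition. Finiteness is not needed, since an element of a $\bZ[1/N]$-module that vanishes at every localization at $p\nmid N$ is zero. However, you should work in $M\otimes\bZ[1/N]$ rather than $M\otimes\bQ$, since $M$ may have torsion.
\end{itemize}
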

\begin{proof}
Since     $\Gamma$ is a congruence subgroup satisfying $\Gamma(N) \subset \Gamma \subset \Gamma_1(N)$, for any parabolic matrix $\gamma \in \Gamma$, we have 
\[
\gamma^N \in \Gamma(N) \quad \textrm{ and } \quad \fz_{\Gamma, 2k}(\gamma^N) = N \cdot \fz_{\Gamma, 2k}(\gamma). 
\]
It follows that the canonical homomorphism 
\[
H_1^{\partial}(\overline{\Gamma(N)}, \LS_{2k}) \otimes_\bZ \bZ[1/N] \twoheadrightarrow H_1^{\partial}(\overline{\Gamma}, \LS_{2k}) \otimes_\bZ \bZ[1/N]
\]
is surjective. 
Hence we may assume that $\Gamma = \Gamma(N)$. 
Let $\alpha := \begin{pmatrix}N&0\\0&1\end{pmatrix} \in M_2^+(\bZ)$. 
Then $\Gamma_1(N^2) \subset \alpha^{-1} \Gamma \alpha \subset \Gamma_0(N^2)$ and $\alpha^{-1} \Gamma \alpha /\Gamma_1(N^2) \cong \ker((\bZ/N^2\bZ)^\times \twoheadrightarrow (\bZ/N\bZ)^\times)$. 
In particular, for any  $\gamma \in  \Gamma$, we have $\gamma^N \in \alpha\Gamma_1(N^2)\alpha^{-1}$. 
Hence the double coset operator $[\Gamma\alpha \Gamma_1(N^2)]$ induces  surjections 
\begin{align*}
 H_1(\overline{\Gamma_1(N^2)}, \LS_{2k})  \otimes_{\bZ} \bZ[1/N] &\twoheadrightarrow
 H_1(\overline{\Gamma}, \LS_{2k}) \otimes_{\bZ} \bZ[1/N], 
\\
H_1^{\partial}(\overline{\Gamma_1(N^2)}, \LS_{2k})  \otimes_{\bZ} \bZ[1/N] &\twoheadrightarrow H_1^{\partial}(\overline{\Gamma}, \LS_{2k})  \otimes_{\bZ} \bZ[1/N], 
\\
\fZ_{\Gamma_1(N^2), 2k}  \otimes_{\bZ} \bZ[1/N] &\twoheadrightarrow  
\fZ_{\Gamma, 2k}  \otimes_{\bZ} \bZ[1/N]. 
\end{align*}
Therefore it suffices to prove that 
\[
H_1^{\partial}(\overline{\Gamma_1(N^2)}, \LS_{2k})\subset \fZ_{\Gamma_1(N^2), 2k} \otimes_{\bZ} \bZ[1/N].
\]
Take a prime number $p$ with $p \nmid N$, and let us compute $T_p (\fz_{\Gamma_1(N^2), 2k}(T))$. 
Fix $\beta := \begin{pmatrix}
        m&n\\N^2 & p
    \end{pmatrix} \in \Gamma_0(N^2)$ as in  Remark \ref{rem:coset-decomposition}. 
    Set 
    \[
    \alpha := \begin{pmatrix}
        1 & 0\\0&p
    \end{pmatrix} \quad \textrm{and} \quad 
    \langle p \rangle := [\Gamma_1(N^2) \beta \Gamma_1(N^2)] = \beta_*.
    \]
Then it follows from Remark \ref{rem:coset-decomposition} and the proof of Proposition \ref{prop:Hecke stability} that 
\begin{align*}
    &T_p (\fz_{\Gamma_1(N^2), 2k}(T)) 
    \\
    &= \alpha_* \left(\sum_{j=0}^{p-1} (T-1)\begin{pmatrix}
        1&j\\0&1
    \end{pmatrix}^{-1} \otimes \begin{pmatrix}
        1&j\\0&1
    \end{pmatrix}X_2^{2k} + (T-1)(\alpha^{-1} \beta \widetilde{\alpha})^{-1} \otimes (\alpha^{-1} \beta \widetilde{\alpha}) \cdot X_2^{2k} \right)
    \\
    &= \alpha_* \left((T^p-1) \otimes X_2^{2k} + ((\alpha^{-1} \beta \widetilde{\alpha})T(\alpha^{-1} \beta \widetilde{\alpha})^{-1}-1) \otimes (\alpha^{-1} \beta \widetilde{\alpha}) \cdot X_2^{2k}\right)
    \\
    &= (T-1) \otimes X_2^{2k} + ( \beta T^p \beta^{-1}-1) \otimes  p^{2k}\beta   X_2^{2k}
    \\
    &= (1 + p^{2k+1} \langle p \rangle)\fz_{\Gamma_1(N^2), 2k}(T). 
\end{align*} 
Since it is well-known that the operators $T_p$ and $\langle p \rangle$ commute (see, for example, \cite[p. 169]{DS05}), we conclude that 
\[
\lim_{n \to \infty} T_p^{n!} (\fz_{\Gamma_1(N^2), 2k}(T)) = \lim_{n \to \infty}(1 + p^{2k+1} \langle p \rangle)^{n!}\fz_{\Gamma_1(N^2), 2k}(T) = \fz_{\Gamma_1(N^2), 2k}(T) . 
\]
Hence, we obtain from Theorem \ref{thm:main} that 
\[
\fz_{\Gamma_1(N^2), 2k}(T) \in \fZ_{\Gamma_1(N^2), 2k}^{\pord}. 
\]
Thus Proposition \ref{prop:Hecke stability} and Lemma \ref{lem:boundary-hecke-generater} imply 
\[
H_1^{\partial}(\overline{\Gamma_1(N^2)}, \LS_{2k}) \subset \fZ_{\Gamma_1(N^2), 2k} \otimes_{\bZ} \bZ_p. 
\]
Since $p$ is an arbitrary prime number with $p \nmid N$, we obtain $H_1^{\partial}(\overline{\Gamma_1(N^2)}, \LS_{2k}) \subset \fZ_{\Gamma_1(N^2), 2k} \otimes_{\bZ} \bZ[1/N]$. 
\end{proof}

\begin{cor}\label{cor:boundary-rational}
For any  congruence subgroup $\Gamma$,  the boundary subgroup $H_1^{\partial}(\overline{\Gamma}, \LS_{2k})$ is contained in $\fZ_{\Gamma, 2k} \otimes_{\bZ} \bQ$. 
\end{cor}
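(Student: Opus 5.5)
The plan is to reduce to Proposition~\ref{prop:boundary-Gamma_1} by passing through a principal congruence subgroup, and then to push the conclusion back down to an arbitrary $\Gamma$. Since we only need a statement after tensoring with $\bQ$, all index factors may be inverted freely.

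First, choose $N$ with $\Gamma(N) \subset \Gamma$; this is possible because $\Gamma$ is a congruence subgroup. Proposition~\ref{prop:boundary-Gamma_1}, applied with its auxiliary subgroup taken to be $\Gamma(N)$, shows that the proof there establishes the inclusion $H_1^{\partial}(\overline{\Gamma(N)}, \LS_{2k}) \subset \fZ_{\Gamma(N),2k}\otimes \bZ[1/N]$. That proof first reduces to $\Gamma=\Gamma(N)$ and then pushes forward from $\Gamma_1(N^2)$ via $[\Gamma(N)\alpha\Gamma_1(N^2)]$. So I will extract from it the statement for $\Gamma(N)$ itself. If needed, I will redo that step explicitly: the boundary of $\Gamma_1(N^2)$ lies in $\fZ_{\Gamma_1(N^2),2k}\otimes\bZ[1/N]$, and the double coset operator maps both boundary parts and hyperbolic parts compatibly (by the preceding lemma and Proposition~\ref{prop:Hecke stability}). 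It is surjective on boundary parts after inverting $N$, as asserted in that proof. Together these give the claim for $\Gamma(N)$, and a fortiori the inclusion after $\otimes\bQ$.

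Second, descend from $\Gamma(N)$ to $\Gamma$. Let $\pi=\Cor\colon H_1(\overline{\Gamma(N)},\LS_{2k})\to H_1(\overline{\Gamma},\LS_{2k})$ be the corestriction, which is the double coset operator $[\Gamma\, I_2\, \Gamma(N)]$. By the boundary compatibility lemma and Proposition~\ref{prop:Hecke stability}, $\pi$ maps $H_1^\partial$ into $H_1^\partial$ and $\fZ$ into $\fZ$. The key point is that $\pi$ is surjective on boundary parts after $\otimes\bQ$. For a parabolic $\gamma\in\overline{\Gamma}$, some power $\gamma^m$ with $m\mid [\Gamma:\Gamma(N)]$ (indeed $m=N$ works, as in the proof of Proposition~\ref{prop:boundary-Gamma_1}) lies in $\Gamma(N)$. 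We have $\pi(\fz_{\Gamma(N),2k}(\gamma^m))=\fz_{\Gamma,2k}(\gamma^m)=m\,\fz_{\Gamma,2k}(\gamma)$, where the last equality holds because $Q_{\gamma^m}=\pm Q_\gamma$ and $(\gamma^m-1)\otimes v=\sum_{i}\gamma^i(\gamma-1)\otimes v=m(\gamma-1)\otimes v$ for $\gamma$-invariant $v$. Hence every generator of $H_1^\partial(\overline\Gamma,\LS_{2k})$ lies in $\tfrac1m\pi(H_1^\partial(\overline{\Gamma(N)},\LS_{2k}))\subset \pi(\fZ_{\Gamma(N),2k})\otimes\bQ\subset\fZ_{\Gamma,2k}\otimes\bQ$.

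The main obstacle is the first step: confirming that Proposition~\ref{prop:boundary-Gamma_1} really yields the statement for $\Gamma(N)$ rather than only for $\Gamma_1(N)$. The statement as printed concerns $\Gamma_1(N)$, but its proof handles $\Gamma(N)$ directly, so I would cite the proof, or restate its middle paragraph, to justify this. A sign convention issue also arises, namely whether $Q_\gamma$ or $Q_\gamma^k$ is used in the boundary cycles, but it is harmless because the argument only uses $\gamma$-invariance of the coefficient.
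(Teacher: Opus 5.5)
Your proposal is correct and follows essentially the paper's route. The paper likewise applies Proposition~\ref{prop:boundary-Gamma_1} to $\Gamma(N)$ itself; as you noticed, the $\Gamma_1(N)$ in its printed statement should read $\Gamma$. It then descends along the canonical map $H_1(\overline{\Gamma(N)},\LS_{2k})\to H_1(\overline{\Gamma},\LS_{2k})$ using $\fz_{\Gamma,2k}(\gamma^d)=d\cdot\fz_{\Gamma,2k}(\gamma)$ with $d=[\Gamma:\Gamma(N)]$, leaving implicit the Hecke stability of $\fZ$ under this map, which you state explicitly.

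Your observation that the exponent $N$ already works (since $\overline{\gamma}^N\in\overline{\Gamma(N)}$ for every parabolic $\gamma$) shows slightly more: the inclusion holds in $\fZ_{\Gamma,2k}\otimes_{\bZ}\bZ[1/N]$ for every $\Gamma\supset\Gamma(N)$.
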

\begin{proof}
Since     $\Gamma$ is a congruence subgroup, there is a positive integer $N$ such that $\Gamma(N) \subset \Gamma$. 
Let $d := [\Gamma \colon \Gamma(N)]$. Then for any parabolin matrix $\gamma \in \Gamma$, we have 
\[
\gamma^d \in \Gamma(N) \quad \textrm{ and } \quad \fz_{\Gamma, 2k}(\gamma^d) = d \cdot \fz_{\Gamma, 2k}(\gamma). 
\]
This fact implies that the canonical homomorphism $H_1^{\partial}(\overline{\Gamma(N)}, \LS_{2k}) \otimes_\bZ \bQ \twoheadrightarrow H_1^{\partial}(\overline{\Gamma}, \LS_{2k}) \otimes_\bZ \bQ$ is surjective. 
Therefore, this corollary follows from Proposition \ref{prop:boundary-Gamma_1}. 
\end{proof}

\begin{cor}\label{cor:boundary-Gamma_0}
    For any square-free positive integer $N$, the boundary subgroup $H_1^{\partial}(\overline{\Gamma_0(N)}, \LS_{2k})$ is contained in $\fZ_{\Gamma_0(N), 2k} \otimes_{\bZ} \bZ[1/N]$. 
\end{cor}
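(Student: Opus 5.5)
The plan is to deduce the statement from Proposition~\ref{prop:boundary-Gamma_1}, applied with $\Gamma=\Gamma_1(N)$, by pushing forward along the canonical map
\[
\pi \colon H_1(\overline{\Gamma_1(N)}, \LS_{2k}) \to H_1(\overline{\Gamma_0(N)}, \LS_{2k}),
\]
which is the double coset operator $[\Gamma_0(N)\, I_2\, \Gamma_1(N)]$. On the level of the identification \eqref{eq:identification_first_homology}, $\pi$ sends $(\gamma-1)\otimes v$ to the class of the same expression in $I_{\overline{\Gamma_0(N)}}\otimes_{\overline{\Gamma_0(N)}}\LS_{2k}$. By Proposition~\ref{prop:Hecke stability} we have $\pi(\fZ_{\Gamma_1(N),2k})\subset \fZ_{\Gamma_0(N),2k}$. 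Hence, once we know $H_1^{\partial}(\overline{\Gamma_0(N)},\LS_{2k}) \subset \pi\bigl(H_1^{\partial}(\overline{\Gamma_1(N)},\LS_{2k})\bigr)$, Proposition~\ref{prop:boundary-Gamma_1} gives the claim after tensoring with $\bZ[1/N]$.

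The key step, and the only place where square-freeness enters, is showing that every parabolic element of $\overline{\Gamma_0(N)}$ already lies in $\overline{\Gamma_1(N)}$. Let $\gamma=\pmat{a&b\\c&d}\in\Gamma_0(N)$ be parabolic. After replacing $\gamma$ by $-\gamma$, which does not change its image in $\PSL_2(\Z)$, we may assume $a+d=2$. Since $c\equiv 0 \pmod N$, we get $ad\equiv 1 \pmod N$. Substituting $d=2-a$ gives $(a-1)^2\equiv 0 \pmod N$. Because $N$ is square-free, this forces $a\equiv 1 \pmod N$, and then $d\equiv 1 \pmod N$. Thus $\gamma\in\Gamma_1(N)$.

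Given this, for any parabolic $\overline{\gamma}\in\overline{\Gamma_0(N)}$ the quadratic form $Q_\gamma$ depends only on $\gamma$ and not on the ambient group. The description of $\pi$ above then yields $\fz_{\Gamma_0(N),2k}(\gamma)=\pi(\fz_{\Gamma_1(N),2k}(\gamma))$. Therefore $\pi$ maps $H_1^{\partial}(\overline{\Gamma_1(N)},\LS_{2k})$ onto $H_1^{\partial}(\overline{\Gamma_0(N)},\LS_{2k})$, and combining this with Proposition~\ref{prop:boundary-Gamma_1} and Proposition~\ref{prop:Hecke stability} (after $\otimes\bZ[1/N]$) finishes the proof.

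I expect no real obstacle beyond the short congruence computation. The one point to check is that $\pi$ acts on cycle representatives by the naive map, which follows because the coset decomposition $\Gamma_0(N)\, I_2\, \Gamma_1(N) = \Gamma_0(N)$ has a single representative.
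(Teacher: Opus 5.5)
Your proposal is correct and follows the paper's proof: square-freeness forces every parabolic element of $\overline{\Gamma_0(N)}$ into $\overline{\Gamma_1(N)}$, so the canonical map $H_1^{\partial}(\overline{\Gamma_1(N)}, \LS_{2k}) \to H_1^{\partial}(\overline{\Gamma_0(N)}, \LS_{2k})$ is surjective, and Proposition~\ref{prop:boundary-Gamma_1} finishes the argument. You also spell out two steps the paper leaves implicit, namely the congruence computation $(a-1)^2\equiv 0 \pmod N$ and the use of Proposition~\ref{prop:Hecke stability} to get $\pi(\fZ_{\Gamma_1(N),2k})\subset\fZ_{\Gamma_0(N),2k}$.
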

\begin{proof}
Since $N$ is square-free, every parabolic element of $\Gamma_0(N)$ lies in fact in $\Gamma_1(N)$. 
This fact implies that the canonical homomorphism $H_1^{\partial}(\overline{\Gamma_1(N)}, \LS_{2k}) \twoheadrightarrow H_1^{\partial}(\overline{\Gamma_0(N)}, \LS_{2k})$ is surjective. 
Hence this corollary follows from Proposition \ref{prop:boundary-Gamma_1}. 
\end{proof}

\subsection{On the Hecke module $H_1(\overline{\Gamma}, \LS_{2k})/\fZ_{\Gamma, 2k}$}

Combining Corollary \ref{cor:boundary-rational} with a result by Goldman and Millson \cite{GM86}, we obtain the following 

\begin{thm}\label{thm:quotient-torsion}
    For any  congruence subgroup $\Gamma$,  we have 
    \[
    \fZ_{\Gamma, 2k} \otimes_{\bZ} \bQ = H_1(\overline{\Gamma}, \LS_{2k}) \otimes_{\bZ} \bQ. 
    \]
\end{thm}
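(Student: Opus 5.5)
The plan is to combine Corollary~\ref{cor:boundary-rational} with the theorem of Goldman and Millson \cite{GM86}. In the form we need, their result says that for a torsion-free (or, after passing to a finite-index subgroup, arbitrary) arithmetic Fuchsian group, rational homology with coefficients in $\LS_{2k,\bQ}$ is spanned modulo the boundary by closed geodesics carrying their invariant quadratic form to the $k$-th power. Concretely,
\[
H_1(\overline{\Gamma},\LS_{2k,\bQ}) \;=\; \bigl(\fZ_{\Gamma,2k}\otimes\bQ\bigr) \+ \bigl(H_1^{\partial}(\overline{\Gamma},\LS_{2k})\otimes\bQ\bigr).
\]
Dually, a cohomology class which pairs to zero with every cycle $(\gamma-1)\otimes Q_\gamma^k$ is compactly supported (interior) and in fact vanishes. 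Since Corollary~\ref{cor:boundary-rational} places the boundary term inside $\fZ_{\Gamma,2k}\otimes\bQ$, the theorem follows at once.

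The steps, in order, are as follows.

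First, I reduce to a torsion-free normal subgroup $\Gamma_0\subset\Gamma$ of finite index, for instance $\Gamma(N)$ with $N\ge 3$ contained in $\Gamma$. Corestriction $H_1(\overline{\Gamma_0},\LS_{2k,\bQ})\to H_1(\overline{\Gamma},\LS_{2k,\bQ})$ is surjective over $\bQ$, because restriction followed by corestriction is multiplication by the index. By the proof of Proposition~\ref{prop:Hecke stability}, corestriction sends $\fZ_{\Gamma_0,2k}$ into $\fZ_{\Gamma,2k}$. It therefore suffices to treat $\Gamma_0$.

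Second, I identify $\fz_{\Gamma_0,2k}(\gamma)$ with the geometric cycle used in \cite{GM86}: the closed geodesic $c_\gamma$ decorated with the parallel section $Q_\gamma^k$ of the local system. Here one must check that the group-homology description via \eqref{eq:identification_first_homology} agrees with the singular homology of $\Gamma_0\bs\bbH$. This is standard, since $\bbH$ is contractible.

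Third, I invoke Goldman--Millson, in the formulation that the pairing
\[
H^1_c(\Gamma_0\bs\bbH,\LS_{2k,\bQ}^{\vee})\times \fZ_{\Gamma_0,2k}\otimes\bQ\to\bQ
\]
has trivial left kernel modulo the image of the boundary. Equivalently, $\fZ_{\Gamma_0,2k}\otimes\bQ$ together with $H_1^\partial$ spans $H_1$. I would then apply this via the long exact sequence of the pair (Borel--Serre compactification, boundary) and Poincar\'e duality $H_1\cong H^1_c$.

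Finally, I add Corollary~\ref{cor:boundary-rational} to absorb the boundary part.

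The main obstacle is the third step. The precise statement in \cite{GM86} concerns the nonvanishing of periods of cusp forms (Eichler--Shimura classes) along closed geodesics, so I would have to translate it carefully. Via Eichler--Shimura, the interior part of $H^1$ corresponds to $S_{2k+2}\oplus\overline{S_{2k+2}}$. Integrals of $f(z)\,Q_\gamma(z,1)^k\,dz$ over geodesics are the relevant periods, and Goldman--Millson (via Katok-type arguments) show that no nonzero element of this space has all such periods vanishing. Combined with the Eisenstein/boundary decomposition, this yields surjectivity modulo $H_1^\partial$. Care is needed to cover both the holomorphic and antiholomorphic components; the complex conjugation symmetry of $\fZ$, coming from conjugation by $\mathrm{diag}(1,-1)$ after passing to a suitable $\Gamma_0$, may help here.
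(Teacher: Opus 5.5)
Your proposal is correct and is the paper's argument: Goldman--Millson gives $(\fZ_{\Gamma,2k}+H_1^{\partial}(\overline{\Gamma},\LS_{2k}))\otimes_{\bZ}\bQ=H_1(\overline{\Gamma},\LS_{2k})\otimes_{\bZ}\bQ$, and Corollary~\ref{cor:boundary-rational} absorbs the boundary term. The paper cites \cite[Corollary~1 to Theorem~C]{GM86}, which is already this homological statement for $\overline{\Gamma}$ itself, so your torsion-free reduction and the Eichler--Shimura translation you flag as the main obstacle are not needed. That translation would in fact require more than Katok-type nonvanishing of the complex periods $r_\gamma(f)$, since that alone does not rule out a real class $\mathrm{Re}\,\omega_f$ vanishing on all hyperbolic cycles.
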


\begin{proof}
By \cite[Corollary~1 to Theorem~C]{GM86}, we have
\[
\bigl(H_1^{\partial}(\overline{\Gamma}, \LS_{2k}) + \fZ_{\Gamma, 2k}\bigr)
\otimes_{\bZ} \bQ
=
H_1(\overline{\Gamma}, \LS_{2k}) \otimes_{\bZ} \bQ.
\]
Moreover, Corollary~\ref{cor:boundary-rational} shows that
\[
H_1^{\partial}(\overline{\Gamma}, \LS_{2k}) \otimes_{\bZ} \bQ
\subset
\fZ_{\Gamma, 2k} \otimes_{\bZ} \bQ.
\]
Combining these, we conclude that $\fZ_{\Gamma, 2k} \otimes_{\bZ} \bQ = H_1(\overline{\Gamma}, \LS_{2k}) \otimes_{\bZ} \bQ$. 
\end{proof}

\begin{thm}\label{thm:quotient_finite_non-ordinary}
    Let $N$ be a positive integer, and let $\Gamma$ be a congruence subgroup
satisfying $\Gamma_1(N) \subset \Gamma \subset \Gamma_0(N)$.
 Then 
 \[
H_1(\overline{\Gamma}, \LS_{2k})/\mfZ_{\Gamma, 2k}
\]
is a Hecke module which is finite and non-ordinary at all prime numbers $p$. 
\end{thm}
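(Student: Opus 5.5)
The argument has three parts: Hecke stability, finiteness, and non-ordinarity. Throughout, put $M := H_1(\overline{\Gamma}, \LS_{2k})/\mfZ_{\Gamma,2k}$.

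\emph{Hecke stability.} By Proposition~\ref{prop:Hecke stability}, every double coset operator $[\Gamma\alpha\Gamma]$ with $\alpha \in M_2^+(\Z)$ maps $\mfZ_{\Gamma,2k}$ into itself. Hence $M$ inherits the structure of a module over the Hecke ring of $\Gamma$.

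\emph{Finiteness.} $H_1(\overline{\Gamma}, \LS_{2k})$ is a finitely generated $\Z$-module, because $\overline{\Gamma}$ is finitely generated (indeed virtually free) and $\LS_{2k}$ is finitely generated over $\Z$. Its quotient $M$ is therefore finitely generated. By Theorem~\ref{thm:quotient-torsion} we have $M\otimes_\Z\Q = 0$, so $M$ is torsion. A finitely generated torsion $\Z$-module is finite, and consequently $M = \bigoplus_p M\otimes_\Z \Z_p$, with only finitely many nonzero summands.

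\emph{Non-ordinarity at every $p$.} Fix a prime $p$. Since $\Z_p$ is flat over $\Z$, tensoring the exact sequence $0\to\mfZ_{\Gamma,2k}\to H_1(\overline{\Gamma},\LS_{2k})\to M\to 0$ with $\Z_p$ gives the exact sequence
\[
0\to \mfZ_{\Gamma,2k}\otimes\Z_p \to H_1(\overline{\Gamma},\LS_{2k,\Z_p}) \to M\otimes\Z_p\to 0 .
\]
Here we use that $\Z_p$ is flat, which gives $H_1(\overline{\Gamma},\LS_{2k})\otimes\Z_p = H_1(\overline{\Gamma},\LS_{2k,\Z_p})$. All terms are finitely generated $\Z_p$-modules carrying the operator $[\Gamma\,\mathrm{diag}(1,p)\,\Gamma]$, and the maps are Hecke-equivariant, so the idempotent $e_p$ acts compatibly on each term. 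Since $e_p$ is an idempotent in the endomorphism ring, applying $e_p$ preserves exactness. We obtain
\[
0\to e_p(\mfZ_{\Gamma,2k}\otimes\Z_p)\to H_1^{\ord}(\overline{\Gamma},\LS_{2k,\Z_p})\to e_p(M\otimes\Z_p)\to 0 .
\]
The first term is $\mfZ^{\pord}_{\Gamma,2k}$ by definition, and by Theorem~\ref{thm:main} the first map is surjective. Therefore $e_p(M\otimes\Z_p)=0$, which is exactly the statement that $M$ is non-ordinary at $p$.

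The one point that needs some care is the well-definedness of $e_p$ on $M\otimes\Z_p$ and its compatibility with the sequence. On any finitely generated $\Z_p$-module $A$ with an endomorphism $T$, the limit $\lim T^{m!}$ exists and is the projector onto the part of $A$ on which $T$ acts invertibly. This projector is functorial for $T$-equivariant maps, so it commutes with the maps in the sequence above. Finally, to state the conclusion for $M$ itself rather than for $M\otimes\Z_p$, I would note that $M\otimes\Z_p$ is just the $p$-primary part of the finite module $M$.
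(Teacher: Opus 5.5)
Your proof is correct and follows the same route as the paper, whose proof simply cites Proposition~\ref{prop:Hecke stability}, Theorem~\ref{thm:main} and Theorem~\ref{thm:quotient-torsion}. You supply the steps the paper leaves implicit, and handle them correctly: finite generation of $H_1(\overline{\Gamma},\LS_{2k})$ (needed to pass from torsion to finite), flat base change to $\Z_p$, and the functoriality and exactness of $e_p$ on the resulting short exact sequence.
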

\begin{proof}
This corollary follows from  Proposition~\ref{prop:Hecke stability} and Theorems \ref{thm:main} and \ref{thm:quotient-torsion}. 
\end{proof}

\bibliography{references}
\bibliographystyle{amsalpha}

\end{document}